\newcommand{\Real}{\mathbb R}
\newcommand{\norm}[1]{\left\|#1\right\|}
\newcommand{\abs}[1]{\left\vert#1\right\vert}
\newcommand{\set}[1]{\left\{#1\right\}}
\newcommand{\G}{G_\alpha}
\newcommand{\K}{\mathcal{K}}
\newcommand{\J}{\mathcal{J}}
\newcommand{\U}{\mathcal{U}}
\newcommand{\T}{\mathcal{T}}
\newcommand{\stu}{u^*}
\newcommand{\F}{\mathcal{F}}
\newcommand{\N}{\mathcal{N}}
\newcommand{\be}{\begin{equation}}
\newcommand{\ee}{\end{equation}}
\theoremstyle{break}
\newtheorem{theorem}{Theorem}
\theoremstyle{remark}
\newtheorem{remark}{Remark}
\theoremstyle{theorem}
\newtheorem{prop}{Proposition}
\theoremstyle{definition}
\theoremstyle{lemma}
\newtheorem{lemma}{Lemma}
\begin{document}

\title{Analysis of a model for the dynamics of riots}

\author[a]{H. Berestycki}
\author[b]{N. Rodr\'iguez\footnote{corresponding author: nrodriguez@unc.edu}}
\affil[a]{\footnotesize{EHESS, CAMS, 190 - 198 avenue de France, 75013 Paris, France.}}
\affil[b]{\footnotesize{UNC Chapel Hill, Department of Mathematics,
Phillips Hall, CB$\#$3250, Chapel Hill, NC 27599-3250 }}

\maketitle 

\begin{abstract}
This paper is concerned with modeling the dynamics of social outbursts of activity, such as protests or rioting activity.
In this sequel to our work in \cite{Berestycki2014}, written in collaboration with J-P. Nadal, we model the effect of restriction of information and 
explore the effects that it has on the existence of {\it upheaval waves}.  The systems involve the coupling
of an explicit variable representing the intensity of rioting activity and an underlying (implicit)
field of social tension.  We prove the existence of global solutions to the Cauchy problem in $\Real^d$
as well as the existence of {\it traveling wave solutions} under certain parameter regimes.  We furthermore explore the effects of heterogeneities
in the environment with the help of numerical simulations, which leads to pulsating waves in certain cases.  
We analyze the effects of periodic domains as well as the {\it barrier} problem with the help of numerical simulations and discuss many open problems.

\end{abstract}

\section{Introduction}
The need to understand the spatio-temporal dynamics of social outbursts, such as protests or rioting activity,
has been highlighted with many current events.  For example, the fatal shooting of Michael Brown in Ferguson, Missouri (USA) 
by a white police officer in August of 2014 led to a two-week period of protests,  which subdued until the grand jury's
decision not to indict the officer in question \cite{NYT14}.  A similar wave of protests was later observed in New York City (USA) after the
grand jury's decision not to indite the police officer involved in Eric Garner's chokehold case \cite{Goodman2014}.  As many individuals throughout
the U.S. and the rest of the world are keeping a close watch on these and similar cases there is a risk of increased social tension spreading throughout the country 
and abroad \cite{Eversley2014, Rodriguez2014b, Hickey2014}.  We think of protests as outbursts of social activity
which are induced by an initial shock and grow, due to a self-reinforcement mechanism and/or continuing external shocks, for some period of time. 
We sometimes refer to this initial shock as the triggering event. 
Clearly, it is important not only to understand when these outbursts of activity will occur, but also how they will spread geographically.
This motivated us to introduce various models that include what we believe to be the bare essential mechanisms which are necessary 
to capture the stylized facts of outbursts of social activity, in particular related to protests or civil unrest, in \cite{Berestycki2014} written in collaboration with 
 J-P. Nadal. 
The models we introduced involve the coupling of an
explicit variable representing the intensity of rioting activity and an underlying (implicit) field
of social tension. The effects of exogenous and endogenous factors as well as
various propagation mechanisms are also included in the system.  It must be said that  
the purpose of these series of works is not to understand the economic, social or political origins of riots, 
and much less to discuss the legitimacy or lack thereof of 
any riot.  These issues are much too complex to be studied with mathematical models which are obtained by making simplifying assumptions.   
Instead, our objectives are of an exploratory nature, where we wish to capture some qualitative facts of riots using our model with the aim of gaining
insight into how riots spread and what the impact of restriction of information and various communications networks is on the spread and duration of rioting activity. 

The models of \cite{Berestycki2014} introduce new and interesting mathematical structures for which fundamental questions, such as 
whether the systems have global solutions, have yet to be studied.  The existence of traveling wave solutions in the case of non-local diffusion is also 
an open question, which we hope to address in future work.  The purpose of this manuscript is four-fold.  First, we introduce a model that captures
the effect that restriction of information has on the success or failure of a riot.  Second, we explore the existence of traveling wave solutions (upheaval waves),
which provides insight into how fast riots spread.  Third, we explore the effects of various heterogeneous environments on the success or failure of riots through
numerical experiments.  
Finally, we take the first step in developing the mathematical theory called for by the family of models introduced in \cite{Berestycki2014}, 
by proving the existence and uniqueness of solutions to the Cauchy problem.  Furthermore, we discuss many open problems that are of interest from both the point of view of
mathematics and the application.   The approach of this work bears similarities to a recent literature on the use of mathematics in the
analysis of uncivil and criminal activities \cite{Short, Berestycki2010,Schweitzer2000}, showing for instance that patterns, which are useful to understand, 
emerge when looking at the macroscopic scale. 

As the family of models we introduced in our work with Nadal  \cite{Berestycki2014} capture the effects of geographic proximity as well as social connections between different 
regions that are prone to protests or rioting activity, they are natural candidates to shed light onto answer to questions such as why some
protest or even revolutions skip certain countries.  There is evidence that restriction of information hinders revolutions and
that increased social media access and technology has the opposite effect \cite{Ghannam2011,Ang1996,Attia2011,Tufekci2012}.  
Naturally, increased access to technology has decreased the ability of any state to isolate their populations from the rest of the world.
The models we introduce here modify those from \cite{Berestycki2014} in order
to take into account the effect of restriction of information in a system.   
Under some parameter regime our model is able to capture ``upheaval waves," a disturbance of rioting activity or
protests that propagates through space.  These disturbances are sparked by a triggering event of sufficient intensity.
It is of interest to quantify the speed at which these upheaval waves move, something that is facilitated by the models we introduce.  
Moreover, as the restriction of information is reduced we observe a shift in the qualitative behavior of the upheaval waves.   Particularly,
when there is some restriction of information there exists upheaval waves, under certain parameter regimes, which travel at a unique speed.  However, 
as the restriction of information is lifted completely
the waves can travel with speed $c$ for $c\ge c^*$. Presumably, the intensity and range of the initial
shock will have an effect on the spreed.  In fact, from numerical simulations we observe that initial levels of rioting activity of the form $u(x,0) = O(e^{-kx})$ lead to traveling
wave solutions whose speed depends on $k$.  This is analogous to what is observed in the well-known Fisher-KPP equation, see for example \cite{Sherratt1998} and references within. 

There have been other models of rioting activity introduced in the literature, see for example \cite{Braha2012, Lang2014, Davies2013}.  
We refer the interested reader to \cite{Berestycki2014}
for a summary of these works.  One of our contribution is the introduction of a continuous model, which is amenable toward more rigorous mathematical analysis.
In fact, our model takes a new direction aiming at using systems of PDEs, an approach which has been useful in  
gaining insight into other social phenomena, 
such as urban crime \cite{Short2008, Berestycki2010, Berestycki2013c},
social segregation \cite{Rodriguez2014a}, opinion dynamics \cite{Schweitzer2000}.

{\it Outline of the paper:}  In section \ref{sec:model} we introduce the basic model along with some variations which we study in this work.  In
section \ref{sec:analysis} we prove some preliminary results needed for future sections.  The
existence of global solutions to the Cauchy problem in $\Real^d$ for $d\ge 1$ is discussed in section \ref{sec:cauchy}.  In section \ref{sec:tw}
we discuss the existence of traveling waves solutions and conclude with a discussion in section \ref{sec:disc}.

\section{Dynamics of social outbursts of rioting activity} \label{sec:model}

\subsection{The basic model}
In this section we summarize the models of \cite{Berestycki2014}.  The derivation began with
a network of nodes which represent centers which are prone to rioting activity.  We approximate the
global behavior of such network by deriving the appropriate continuum limit, which then enables us to perform rigorous mathematical 
analysis.  Let $\Omega\subset \Real^2$ represent the domain of interest, {\it i.e.} a city or country. 
For a location $x\in \Omega$ and time $t$ we let $u(x,t)$ represent the {\it level of rioting activity} and $v(x,t)$ represent the {\it social tension} in the system.  
The fundamental assumption in our model is a dichotomy of external influences and internal influences
in rioting activity: an external shock is necessary to initiate the process (exogenous factor), but the outburst is maintained by 
a self-reinforcement mechanism (endogenous factors).  As these outburst are short-lived one must also include mechanisms that lead to an eventual self-relaxation of the 
activity.  For simplicity we summarize the assumptions that we make below and refer the reader to \cite{Berestycki2014} for a more detailed derivation. 
\begin{enumerate}
\item {\it Exogenous factors}:  there are external factors that can either trigger or rekindle an outburst of protests or rioting activity.  An example of this 
would be the shooting of Michael Brown in Ferguson, Missouri (USA) or the acquittance of the police officers involved in the beating of Rodney King
in 2001 in Los Angeles, California (USA).  Such events are modeled as point sources that are placed at the time and location of the triggering events.  
For example, if $n$ external events or shocks occur at places and times $(s_i,t_i)\in \Omega\times \Real$ then the source term for the social tension is given by:   
\[s(x,t) :=A_i\sum_{i=1}^n \delta_{t=t_i,s=s_i},\]
where $A_i$ is the intensity of the $i^{th}$ event.  For the sake of simplicity, we will always assume that $A_i=A_0$ for all $i=1...n$ in what follows. 

\item {\it Endogenous factors}:  once an outburst of activity is initiated, it is maintained by a self-reinforcement mechanism, up to a certain carrying capacity.  
This is analogous to the repeat and near-repeat victimization effect assumed in the models of Short {\it et al.} in 
\cite{Short2009} and Berestycki and Nadal in \cite{Berestycki2010}.  In essence, this is the phenomena that criminal activities in certain location lead 
to an increased probability of that same location falling victim to more criminal activities. 
The self-reinforcement mechanism is modeled
by the function $G(z),$ which is included as a source term in the dynamics of the level of rioting activity and satisfies: 
\[
G(z)> 0\;\text{for}\;z\in(0, z_0), \;G(0)= 0\;\text{and}\; G(z)\leq 0\;\text{for} \;z\geq z_0.
\]
 An example of this is a KPP-type term: $G(z) = z (1 - \frac{z}{z_0})$ for $z\in (0, z_0)$ for some $z_0>0$, where $z_0$ is the carrying capacity.  The
need for a carrying capacity arises from the fact that there are a finite number of people that will protest or in the case of riots a finite number of cars and buildings
that can be destroyed. 

\item {\it Critical threshold for the social tension}: 
an outburst of rioting activity or protests only occurs if the social tension in the system is sufficiently high.  We introduce
 a {\it critical threshold} parameter $a\in \Real^+$ which determines when the endogenous factors begin to play a role.  In other words, when the
social tension is  
above this critical value the self-reinforcement mechanism is initiated.  This gives rise to a transition function that depends on the social tension $r(u).$
This transition function can be, for example, a sigmoid function: 
\[
r(z)= \frac{\gamma}{1 + e^{-\beta(z-a)}},\] 
where $\beta>0$ provides a measure of the transition between a system without self-reinforcement and a self-reinforcing system and $\gamma$ gives us the units of the inverse of time.  In other words,
it provides a measure of how fast the transition is between a system that does not include the endogenous factors and 
a system with the full-force of these factors.  Note that large $\beta$ provides fast transitions.  The coefficient $a$ is the 
critical tension threshold, which provides the level of tension necessary for the self-reinforcement mechanism to begin.  
   
\item {\it Self-relaxation}:  there is a natural decay rate for the level of rioting activity, denoted by the parameter $\omega.$  
Moreover, the level of rioting activity presumably will have an effect on the social tension and so it is 
natural to assume that the social tension deceases at a rate which is proportional to the level of rioting activity.  Specifically, the decay is given by $-h(u)v$ where
\[
h(u) = \frac{\theta}{(1+mu)^p,}
\] 
for some $\theta, p,m>0$.  The parameter $\theta$ measures the decay in the absence of any riots, $p$ controls the effect that the level of rioting activity
has on the social tension, and $m$ provides the units of the inverse of the level of rioting activity.  
Note that when $u$ is large $h(u)$ is small, modeling the fact that high levels of rioting activity will lead to a slow decay of the social tension level. 

\item {\it Geographic proximity}:  while geographic proximity does not provide the full picture, especially nowadays, it still play an important role.  For example, if we assume that 
the level of rioting activity $u(x,t)$ is simply given by the fraction of people protesting then local diffusion is appropriate in modeling the spread of the activity.  Geographic proximity
can also have an effect on the spread of social tension.  Indeed, one might feel the effects of what occurs in neighboring locations more significantly. 
This effect is modeled by the Laplace operator and we will denote by $D_1,D_2\ge 0$ the diffusivity coefficients of $u$ and $v$ respectively.    

\item {\it Social proximity}:  one can argue the social connection between two places
might be even more important than geographic proximity.  Two locations that are socially connected might be more likely to influence each other than two places that
are neighbors.  The importance of this consideration is seen, for example, in the fact that 
urban centers are more likely to influence smaller cities or towns than vise-versa.  Thus, it is important to include these
social connections.  These are modeled though the interaction kernel, $\J(x,y)$, which provides a measure of
how much influence location $y$ has on location $x$.  In a sense, $\J(x,\cdot)$ gives the {\it domain of influence} of $x$, that is, the strength of the influence that
other locations have on $x$. On the other hand, $\J(\cdot, y)$ gives the 
{\it range of influence} of location $y$, that is, the influence that location $y$ has on all other locations.

\end{enumerate}
Combining the above assumptions we state the most general model:
\begin{subequations}\label{sys:cont1}
\begin{align}
&u_t(x,t)= D_1\Delta u(x,t) +r (v(x,t)) G(u(x,t)) - \omega u(x,t),\label{eq:u}\\ 
&v_t(x,t)= D_2\Delta v(x,t) +\kappa\int_\Omega\hspace{-4pt} \J(x,y)v(y)\;dy-  (h(u(x,t)) - \eta) v(x,t)+s(x,t)+v_b,\label{eq:v}
\end{align}
\end{subequations}
for $x\in \Omega$ and $t>0$, where $\Omega \subset \Real^d$ or $\Omega = \Real^d.$   
In \eqref{eq:v} $v_b$ is the base social tension value.  Note that we model the level of rioting activity that exceeds the base rate. 
This will be accompanied with the initial conditions: 
\begin{align}\label{cond:ic}
u(x,0)=u_0(x)\quad\text{and}\quad v(x,0)=v_0(x),
\end{align}
where $u_0(x),v_0(x)\in L^\infty(\Omega)\cap L^1(\Omega)$ and no-flux boundary conditions if $\Omega$ is bounded and 
limiting conditions if $\Omega =\Real^d$.  That is, from now on if $\Omega$ is bounded we assume that
\begin{align}\label{cond:bc}
\frac{\partial}{\partial \vec n}u(x,t)=\frac{\partial}{\partial \vec n}v(x,t) =0\quad\text{for}\quad x\in \partial \Omega\quad\text{and}\quad t>0,
\end{align}
 where $\vec n$ is is the outward unit normal.
Some of the theory, such as the existence of solutions to the Cauchy problem, 
will cover the cases when $d\ge 1.$  The theory of traveling wave solutions will be restricted to $d=1$ since our main interest
is the study of planar traveling waves, which can be reduced to the understanding of the evolution of the wave profile in one dimension.


\begin{remark}
Generally speaking the dispersal of social tension in system one \eqref{sys:cont1} will be either strictly local or strictly non-local, that is $D_2\ge 0$ and $\kappa =0$ or
$D_2=0$ and $\kappa \ge 0.$  However, we state the system in its generality to provide a general global well-posendess theory for the Cauchy problem. 
\end{remark}

\subsection{Modeling restriction of information}
In order to understand the effect that restriction of information has on the ignition and spread of 
protests we modify system \eqref{sys:cont1} to include the fact that a
revolution will begin only if the number of protests are above a certain threshold.  The paper \cite{Lang2014} used this concept with the aim of finding a 
model for the dynamics of revolutions, such as the Arab Spring.  
The idea is that small protests will simply go unnoticed. Moreover, it is also reasonable to assume that the tighter the 
restriction of information the larger the protests have to be in order for them to gain any momentum.  We choose to model the restriction of 
information with the reaction term
$\G(u)$ which is assumed to be of ignition-type: 
\begin{align}\label{eq:igni}
 \G(z) = 0\;\text{for}\;z\in [0,\alpha], \G(0) >0\; \text{for}\; z\in\left(\alpha,z_0\right)\;\text{and}\;\G(z)\le 0 \;\text{otherwise},
\end{align}
for $\alpha \in [0,z_0].$ We introduce $\alpha$ to represent the level of restriction of information in a system with larger values of $\alpha$ corresponding to higher levels of restriction of information.  
Naturally, there are many questions of interest which can be explored, such as what the effect of technology reaching the hands of more and more people is on the existence 
and speed of upheaval waves.  Mathematically, this corresponds to determining the effect of taking the limit $\alpha\rightarrow 0$. Another question that can be explored
is that of why revolutionary waves skip certain countries.  Needless to say, our work can only shed some light on this phenomenon and we do not claim to provide definite
answers.

\subsection{Non-local diffusion}
When the social connection between two locations is only a function of the distance between those locations then the interaction kernel has the form $\J(x,y) = \J(x-y)$ and the system
will have the following form:
\begin{subequations}\label{sys:cont2}
\begin{align}
&u_t(x,t)= D_1\Delta u(x,t) +r (v(x,t)) \G(v(x,t)) - \omega u(x,t),\\
&v_t(x,t)= D_2\Delta v(x,t) +\kappa\int_\Omega \J(x-y)v(y)\;dy -  (h(u(x,t)) - \eta) v(x,t)+s(x,t)+v_b.
\end{align}
\end{subequations}
On the one hand, this restricts the types of social interactions that we consider significantly, on the other hand, system \eqref{sys:cont2}
is a useful simpler model that allows for the exploration of non-local influences.  For example, one can determine the role that the range of
influence plays on the initiation, duration, and spread of rioting activity.  
For this special class of kernels, it is reasonable to 
expect that system \eqref{sys:cont2} will exhibit traveling wave solutions, whereas there is no reason to expect such solutions
for a more general class of kernels.

\section{Preliminary analysis}\label{sec:analysis}
\subsection{Non-dimensional system} 
Before analyzing \eqref{sys:cont1} let us first write the dimensionless model in order to remove superfluous parameters.  
Now, as discussed in \cite{Berestycki2014} it is natural to make the assumption that the timescale over which the endogenous effects are 
observed is smaller than the exogenous effect, which then implies that $\omega>\theta$.  Under this assumption the 
natural time scale is $1/\omega$. The characteristic length scale is $\sqrt{D_1/\omega},$ which is roughly the distance over which
the level of rioting activity diffuses during the characteristic timescale.  A natural change of variables then is: 
\[
\tilde t = \omega t, \;\tilde x=\sqrt{\frac{\omega}{D_1}}x,\;\tilde u = \frac{u}{z_0},\;\text{and}\;\tilde v = \frac{\omega}{v_b}v.
\]
This gives the non-dimensional system: 
\begin{subequations}\label{sys:non_dim}
\begin{align}
&u_t = \Delta u + r (v)G_{\bar \alpha}(u) - u,\\
&v_t = D\Delta v +k \int \J(x,y)v(y)\;dy - (h(u)-k_2)v +1+s(x,t),
\end{align}
\end{subequations}
where the transition, decay, and source functions are now given by:
\[
r(z) = \frac{\rho}{1+e^{-\beta_1(z-\bar a)}},\quad h(z) = \frac{1}{(1+ \bar mz)^p},\quad\text{and}\quad s(x,t)=\tilde A\sum^n_{i=1}\delta_{t=t_i,x=\bar x_i},
\]
the non-dimensional parameters are:
\[
\rho = \frac{\gamma}{\omega},\;k = \frac{\kappa v_b}{\omega},\; k_2 =\frac{\eta}{\omega},\tilde A= \frac{A_0}{v_b}, \;\bar a = \frac{a\omega}{v_0},\;\beta_1=\frac{\beta v_b}{\omega}, 
D=\frac{D_2}{D_1},\;\bar m = mz_0,\;
 \bar\alpha = \alpha/z_0.
\]
and $G_{\bar\alpha}(u) = 0$ for $u\in [0,\bar\alpha]$ and $G_{\bar\alpha}(u) = (u-\bar\alpha)(1-u)$ for $u\notin [0,\bar\alpha]$ with $\bar \alpha \in [0,1]$.  
For notational simplicity we replace $\bar \alpha$ with $\alpha$ for the remainder of the paper.  Note also that since $G_\alpha$ was normalized we gained a
dimensionless parameter $\rho$ that measures the intensity of the self-reinforcement mechanism.  It will soon become apparent that this parameter, which we refer to as the
{\it self-reinforcement} parameter because it measures the strength of the endogenous factors,  
plays a significant role in the behavior of our model. 

\subsection{Constant steady-state classification}\label{sub:ss}
In this section we classify the constant steady-state solutions of \eqref{sys:non_dim} in the case of local diffusion, {\it i.e.} $k=0$.  We rewrite the system for convenience as follows:
\begin{subequations}\label{sys:non_dim_1}
\begin{align}
&u_t = \Delta u + \Phi(u,v),\\
&v_t = D\Delta v +k \int \J(x,y)v(y)\;dy +\Psi(u,v)+s(x,t),
\end{align}
\end{subequations}
where, 
\[
\Phi(u,v):= r (v) \G(u) - u \quad\text{and}\quad \Psi(u,v):=- (h(u)-k_2)v +1. 
\]
Any constant steady-state solution of \eqref{sys:non_dim_1} with $k = 0$ must satisfy:
\begin{align}\label{eq:alss}
v^*(u) = \frac{1}{h(u)-k_2}\quad\text{and}\quad \Phi(v^*(u),u)=0.  
\end{align}
Note that $v^*(u)$ is an increasing function of $u$ such that
\[
\lim_{u\rightarrow \bar u} v^*(u) = \infty, 
\]
for $\bar u = \frac{1}{\bar m}\left[\left(\frac{1}{k_2}\right)^{1/p} -1\right].$  Note that $\bar u>0$ as $k_2<1$ and that for any constant steady-state solution $(u_c,v_c)$ it holds that $u_c<\min\set{1,\bar u}$ and $v^*(0) \leq v_c<v^*(1)$.  
In order to be able to analyze the effect of the transition parameter $\beta$ we normalize the critical social tension value $a$ to be the average between $v^*(0)$ and $v^*(1)$.  
Then the number of constant steady-states will depend on the 
restriction of information parameter $\alpha$ and the interplay between the transition parameter $\beta$ and the self-reinforcement parameter $\rho$.  
We describe the different regimes
of the parameters below and illustrate them in Figures \ref{fig:bif1} and \ref{fig:bif2}.  The various characterizations of the regions in the figure were determined through a combination of analytical results and heuristics, and 
confirmed numerically.  See the Appendix (section \ref{sec:apx}) for more details.  It is important to keep in mind that this analysis is for the case when $r(u),h(u)$ and $G_\alpha(u)$ have the specific forms discussed above. 

\begin{enumerate}
\item {\it No restriction of information}: for $\alpha = 0$ there can be either one, two, or three constant steady-state solutions.   As can be observed in Figure \ref{fig:bif1} 
there exist two curves, $\beta_1(\rho)$ and $\beta_2(\rho)$, that separate the number of steady-state solutions.  A third curve $\beta_3(\rho)$ separates the system from being a 
monostable system, where there is only one stable constant steady-state solution, and a bistable system, where there are two stable constant steady-state solutions.  Finally, the curve
$\beta_4(\rho)$ divides the region where there are three constant steady-state solutions into two regions.  In one region the non-excited steady-state solution is globally stable
and in the other region the fully-excited steady-state solution is globally stable. We provide a more detailed summary of all of the regions below.   
\begin{itemize}
\item Figure \ref{fig:bif1} Region I: This region corresponds to a weak self-reinforcement effect.  That is, if $\rho$ is small then the only constant steady-state is the non-excited state $(0,v^*(0))$.  All solutions of the system \eqref{sys:non_dim_1} approach $(0,v^*(0))$ in the long-term, see Lemma \ref{lem:no_rev}.  

\item Figure \ref{fig:bif1} Region II:  This region corresponds to intermediate and strong reinforcement effect but slow to intermediate transitions.
As $\rho$ increases the existence of two or three constant steady-state solutions depends on the transition parameter $\beta$.  
Slow transitions lead to the existence of two constant steady-state solutions.  For $\beta$
below $\beta_3(\rho)$ the system can be classified as monostable with one stable steady-state solution and
the other unstable (this is region IIb of Figure \ref{fig:bif1}).  Transitions in an intermediate region (region IIa of Figure \ref{fig:bif1}) 
lead to a bistable system with two stable constant steady-state solutions. 
\item Figure \ref{fig:bif1} Region III:  This region corresponds to an intermediate to strong self-reinforcement effect and fast transition rates.  For
$\rho$ sufficiently large system \eqref{sys:non_dim_1} has three constant steady-state solutions.  These correspond to
the non-excited state $(0,v^*(0))$, the {\it warm state}, by which we mean a steady-state with a non-trivial amount of social tension and rioting activity but not the maximum level, and 
the excited state $(u^*,v^*(u^*))$.  The warm state is 
unstable in this regime and the non-excited and excited states are stable.    
For intermediate values of $\rho$ (region IIIa) the non-excited state is globally stable.  On the other hand, for $\rho$ to the right of
$\beta_4(\rho)$ the fully-excited state is globally stable.

\end{itemize}
\item {\it Moderate restriction of information}: for $0<\alpha<1$ there are generally either one or three constant 
steady-state solutions.   The existence of two steady-state 
solutions does occur when the parameters lie on the curve $\beta_1(\rho),$ which is illustrated of Figure \ref{fig:bif2}.  
However, in this cases the second non-zero steady-state solution is degenerate as $\Phi(u,v^*(u),)\le 0$ for all $u\ge 0$. 
The curve $\beta_2(\rho)$ separates the regions where $F(u)\le 0$ for all $0<u<\min\set{1,\bar u}$ and $F(u)>0$ for some
$u\in (0, \min\set{1,\bar u})$.  

\begin{itemize}
\item Figure \ref{fig:bif2} Region I: This region corresponds to weak self-reinforcement effects.  This if, if $\rho$ is small then the only constant steady-state solution is the non-excited state $(0,v^*(0))$. 
  As in the previous case of weak reinforcement mechanisms we expect that all solutions of system \eqref{sys:non_dim_1} approach $(0,v^*(0))$ in the long-term, see Lemma \ref{lem:no_rev}.

\item Figure \ref{fig:bif2} Region IIa: This region corresponds to medium-range reinforcement effects.  As $\rho$ increases there is a bifurcation into a regime where
three constant steady-state solutions exist.  In this case the non-excited state and the fully-excited state are both stable; however, the
non-excited state will be globally stable.  
\item Figure \ref{fig:bif2} Region IIb:  This region corresponds to large reinforcement effects.  As $\rho$ increases further there is a bifurcation into a regime where
the fully-excited state is now globally stable.  In this case the 
non-excited state is the preferred state. 
\end{itemize}
\item {\it Maximum level of restriction of information}: when $\alpha = 1$ then $(0,v^*(0))$ is the only constant steady-state solution. 
\end{enumerate}


\begin{figure}[H]
  \center
\includegraphics[width=1.05\textwidth]{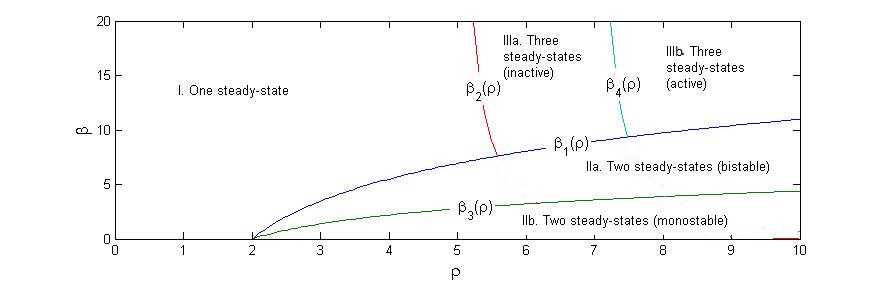}
 \caption{\small{Bifurcation diagram for $\rho$ and $\beta$ of \eqref{sys:non_dim} when $k=0$ and there 
is no restriction of information.}\label{fig:bif1}}
\end{figure}

\begin{figure}[H]
  \center
\includegraphics[width=1.05\textwidth]{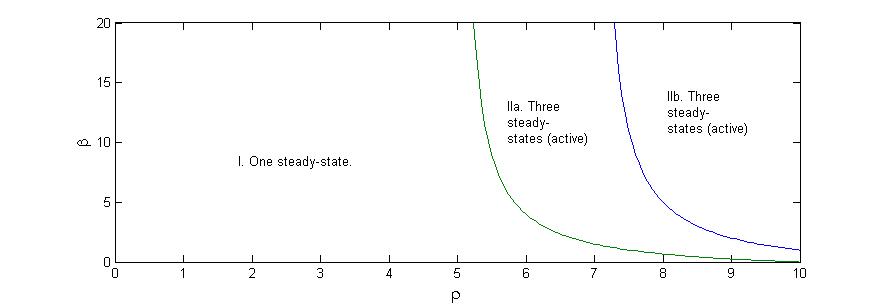}
 \caption{\small{Bifurcation diagram for $\rho$ and $\beta$ of \eqref{sys:non_dim} when $k=0$ and there 
$0<\alpha<1$}.\label{fig:bif2}}
\end{figure}

\subsection{Comparison principle}
Many of the results in this work will rely on the construction of suitable super-solutions and sub-solutions.  In order to be able to use these
solutions as barriers we need the following simple comparison principle.  
\begin{lemma}[Comparison principle]
Let $(u_1,v_1)$ and $(u_2,v_2)$ be two solutions of \eqref{sys:non_dim_1} with initial data satisfying $u_1(x,0)\ge u_2(x,0)$
and $v_1(x,0)\ge v_2(x,0)$ in $\Real^d$ for $d\ge 1$.  Then $u_1(x,t)\ge u_2(x,t)$
and $v_1(x,t)\ge v_2(x,t)$ for all $t\ge 0$.  
\end{lemma}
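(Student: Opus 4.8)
The plan is to reduce the claim to a maximum-principle argument for a \emph{cooperative} linear parabolic system. First I would set $w:=u_1-u_2$ and $z:=v_1-v_2$, so that $w(\cdot,0)\ge0$, $z(\cdot,0)\ge0$, and the goal becomes $w,z\ge0$ for all $t\ge0$. Subtracting the two copies of \eqref{sys:non_dim_1} and writing each difference of nonlinearities as an integral of its derivative (mean value theorem), one obtains a closed linear system for $(w,z)$:
\begin{align*}
w_t &= \Delta w + c_1(x,t)\,w + c_2(x,t)\,z,\\
z_t &= D\Delta z + k\!\int \J(x,y)\,z(y,t)\,dy + d_1(x,t)\,w + d_2(x,t)\,z,
\end{align*}
with $c_2 = r'(\eta)\,G_\alpha(u_1)$ and $d_1 = -h'(\zeta)\,v_2$ for intermediate values $\eta$ between $v_1,v_2$ and $\zeta$ between $u_1,u_2$, while $c_1 = r(v_2)G_\alpha'(\xi)-1$ and $d_2 = k_2-h(u_1)$ are the (sign-indifferent) diagonal coefficients. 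The structural point is that the coupling coefficients are nonnegative: $r'>0$ and $G_\alpha\ge0$ on $[0,1]$ give $c_2\ge0$, while $h'<0$ and $v_2\ge0$ give $d_1\ge0$, so the system is quasimonotone — provided the solutions stay in the region $\set{0\le u_i\le1,\ v_i\ge0}$ in which the model is posed.

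Granting quasimonotonicity, I would close the argument with a Gr\"onwall estimate on the $L^2$ norms of the negative parts $w^-:=\max(-w,0)$ and $z^-:=\max(-z,0)$. Multiplying the $w$-equation by $-w^-$, the $z$-equation by $-z^-$, and integrating over $\Real^d$: the diffusion terms contribute $-\int|\nabla w^-|^2\le0$ and $-D\int|\nabla z^-|^2\le0$; the diagonal terms are bounded by $\|c_1\|_\infty\int(w^-)^2$ and $\|d_2\|_\infty\int(z^-)^2$ since the $u_i,v_i$ are bounded; the cross term obeys $-\int c_2\,z\,w^-\le\int c_2\,z^-\,w^-\le\tfrac12\|c_2\|_\infty\int[(w^-)^2+(z^-)^2]$ because $c_2\ge0$ and $-z\le z^-$, and similarly for the $d_1$ term; and, using $\J\ge0$, $k\ge0$ together with $\sup_x\int\J(x,y)\,dy<\infty$ and $\sup_y\int\J(x,y)\,dx<\infty$, the nonlocal term is controlled by $C\int(z^-)^2$. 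Hence $\varphi(t):=\tfrac12\int_{\Real^d}\bigl[(w^-)^2+(z^-)^2\bigr]\,dx$ satisfies $\varphi'(t)\le C\varphi(t)$ with $\varphi(0)=0$, so $\varphi\equiv0$, i.e.\ $w,z\ge0$ for all $t\ge0$.

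The step I expect to be the real obstacle is the verification of quasimonotonicity rather than the energy estimate itself: without the a priori bounds $0\le u_i\le1$ and $v_i\ge0$ the factor $G_\alpha(u_1)$ in $c_2$ changes sign and the system is no longer cooperative, so the statement should be read for solutions valued in that invariant region — the only case needed for the barrier constructions — invoking the bounds supplied by the well-posedness theory of Section \ref{sec:cauchy}. The remaining points — justifying the integration by parts on $\Real^d$ and the differentiation under the integral sign from the decay and regularity of $u_i,v_i$ (guaranteed since $u_0,v_0\in L^\infty\cap L^1$ together with parabolic smoothing), the absolute continuity of $t\mapsto\varphi(t)$, and the mild hypotheses $\J\ge0$ with uniform $L^1$ bounds in each variable — are routine and would be stated rather than carried out in detail.
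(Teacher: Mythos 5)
Your reduction is exactly the paper's: subtract the two solutions, apply the mean value theorem, and observe that the off-diagonal coefficients $r'(\cdot)\,\G(\cdot)\ge 0$ and $-h'(\cdot)\,v_2\ge 0$ make the linearized system for the differences cooperative (whether the intermediate arguments sit at $u_1$ or $u_2$ is immaterial). Where you depart from the paper is in how positivity is extracted from cooperativity: the paper stops at that structural observation and invokes the standard comparison theory for quasimonotone parabolic systems (an $L^\infty$, pointwise maximum-principle argument), whereas you close the argument yourself with a Stampacchia-type $L^2$ estimate on the negative parts plus Gr\"onwall. Your computation is sound --- the diffusion, diagonal, cross and nonlocal terms are all handled with the right signs --- and you are in fact more careful than the paper on one point: cooperativity genuinely requires $\G(u_\cdot)\ge 0$ and $v_2\ge 0$, i.e. that solutions remain in the region $0\le u\le 1$, $v\ge 0$, which the paper uses tacitly when it asserts the signs of $r'(\bar v)\G(u_2)$ and $h'(\tilde u)v_2$. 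The trade-offs of your route are worth noting. First, the energy argument needs $u_1-u_2$ and $v_1-v_2$ to lie in $L^2(\Real^d)$ and to stay there; this is fine under the standing assumption $u_0,v_0\in L^1(\Omega)\cap L^\infty(\Omega)$, but the lemma is introduced precisely to run barrier arguments, and comparisons against non-decaying objects (e.g. traveling profiles) fall outside unweighted $L^2$ --- there one needs either the $L^\infty$ maximum-principle version the paper appeals to, or a weighted variant of your estimate with a weight such as $e^{-\lambda\abs{x}}$. Second, controlling the nonlocal term by Schur's test requires $\sup_y\int\J(x,y)\,dx<\infty$ in addition to \eqref{cond:kernel}, an extra hypothesis the pointwise argument does not need (only the row bound enters when one evaluates the nonlocal operator at a touching point). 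With these caveats, which you largely flag yourself, your proof is a correct, self-contained alternative to the paper's one-line citation of monotone-systems theory; what it buys is explicitness, at the price of integrability and kernel assumptions that the $L^\infty$ framework avoids.
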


\begin{proof}
Let $u = u_1-u_2$ and $v=v_1-v_2,$ then $(u,v)$ satisfy:
\begin{subequations}\label{sys:comp_prin}
\begin{align}
&u_t = \Delta u + \left(r (v_1)\G'(\bar u)-1\right) u + r'(\bar v)\G(u_2) v,\\
&v_t = D\Delta v +k \int \J(x,y)v(y)\;dy +(k_2 - h(u_1))v - h'(\tilde u)v_2 u,
\end{align}
\end{subequations}
for $\bar u,\tilde u,\bar v\ge 0$.  Since $r'(\bar v)\G(u_2)\ge 0$ and $h'(\tilde u)v_2\le 0$ the system is monotone and thus by standard theory
we conclude that $u(x,t),v(x,t) \ge 0$ for all $t\ge 0$.  
\end{proof}

\subsection{Liouville type results}
As the ultimate distributions of the level of rioting activity and the social tension in the system are important to understand, the question of whether the 
only steady-state solutions to \eqref{sys:non_dim_1} are the constant solutions is not only of mathematical interest but also of general interest.  It is certainly the case that
for parameters in regions I of Figure \ref{fig:bif1} and Figure \ref{fig:bif2} where the reaction terms are strictly negative, the only steady-state solutions will be
$(0,v^*(0))$.  Moreover, in the case when the social tension does not diffuse we can prove that the only steady-state solutions are indeed the constant ones.  

\begin{prop}\label{prop:LV}
Let $D=k = \alpha = 0$ and $\rho, \beta$ be chosen from region IIa or IIb of Figure \ref{fig:bif1}.  Let $(u,v)$ be a classical non-negative bounded steady-state
solutions to \eqref{sys:non_dim_1} with no-flux boundary conditions \eqref{cond:bc} on a bounded domain 
then either $(u,v)\equiv (0,v^*(0))$ or $(u,v)\equiv(u^*,v^*(u^*)).$
\end{prop}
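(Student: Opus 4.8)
The plan is to eliminate $v$, reduce the system to a single scalar semilinear Neumann problem for $u$, and then exploit the sign of the reduced nonlinearity by testing the equation against a suitable affine function of $u$.

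First I would use the $v$-equation. With $D=k=0$ and $s\equiv 0$ (as is implicit for a steady state), the second equation of \eqref{sys:non_dim_1} becomes the pointwise relation $-(h(u(x))-k_2)\,v(x)+1=0$, so $v(x)=v^*(u(x))=\bigl(h(u(x))-k_2\bigr)^{-1}$ for every $x\in\Omega$. Since $v\ge 0$ this already forces $h(u(x))-k_2>0$, i.e. $u(x)<\bar u$; as $\bar\Omega$ is compact and $u$ is continuous, $\max_{\bar\Omega}u<\bar u$, and in particular $F(u):=\Phi(u,v^*(u))=r(v^*(u))G_0(u)-u$ is smooth along the range of $u$. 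Substituting, $u$ is a classical solution of
\[
\Delta u+F(u)=0\ \text{ in }\ \Omega,\qquad \frac{\partial u}{\partial\vec n}=0\ \text{ on }\ \partial\Omega,\qquad 0\le u<\bar u ,
\]
and the no-flux condition for $v$ is then automatic since $v=v^*(u)$. It therefore suffices to prove that $u$ is constant: then $F(u)=0$, so by the classification in Section~\ref{sub:ss} $u\in\{0,u^*\}$, and correspondingly $(u,v)\in\{(0,v^*(0)),\,(u^*,v^*(u^*))\}$.

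To prove constancy I would invoke the classification of Section~\ref{sub:ss}. For $(\rho,\beta)$ in region IIa or IIb the only zeros of $F$ on $[0,\bar u)$ are $0$ and $u^*$; hence $F$ has constant sign on each of $(0,u^*)$ and $(u^*,\bar u)$, and one checks that there is $\mu\in\{0,u^*\}$ with
\[
(u-\mu)\,F(u)\le 0\qquad\text{for all }u\in[0,\bar u).
\]
Concretely, in region IIa one has $F\le 0$ throughout $[0,\bar u)$ and takes $\mu=0$; in the monostable region IIb one has $F>0$ on $(0,u^*)$ and $F<0$ on $(u^*,\bar u)$ and takes $\mu=u^*$. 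Multiplying the elliptic equation by $u-\mu$, integrating over $\Omega$, and using $\partial u/\partial\vec n=0$ gives
\[
\int_\Omega|\nabla u|^2\,dx=\int_\Omega (u-\mu)\,F(u)\,dx\le 0 ,
\]
so $\nabla u\equiv 0$ and $u$ is constant; this works in every dimension on any bounded domain. (If one prefers a maximum-principle argument: in region IIa, $\Delta u=-F(u)\ge 0$ makes $u$ subharmonic, so a non-constant solution would attain its maximum on $\partial\Omega$, contradicting the Hopf lemma together with $\partial u/\partial\vec n=0$; in region IIb one traps $u$ in $[0,u^*]$ by the maximum principle at interior extrema and the Hopf lemma at boundary extrema, and then uses the strong maximum principle for $\Delta u+c(x)u=0$ with $c(x)=r(v^*(u(x)))(1-u(x))-1$ to rule out $u$ touching $0$ unless $u\equiv 0$.)

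The step I expect to be the main obstacle is the input I am borrowing from Section~\ref{sub:ss}: that in regions IIa--IIb the reduced nonlinearity $F$ has no zero strictly between $0$ and $u^*$, and that it has the correct sign on $(u^*,\bar u)$. This is exactly the feature that makes the Liouville property hold — if $F$ had an interior zero (or a spurious sign, so that $W(u):=\int_0^u F$ developed an interior well) the scalar Neumann problem would possess genuinely non-constant solutions, e.g. small-amplitude oscillations about that zero on a sufficiently large domain. Since the partition of the $(\rho,\beta)$-plane in Section~\ref{sub:ss} is established only partly rigorously (with heuristics and numerics), pinning down the sign of $F$ in regions IIa and IIb is the delicate point; the elimination of $v$, the a priori bound $u<\bar u$, and the integration-by-parts argument are all routine.
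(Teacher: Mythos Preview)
Your reduction to the scalar Neumann problem $\Delta u+F(u)=0$ with $F(u)=r(v^*(u))G_0(u)-u$ is exactly how the paper begins. From there the two proofs diverge: the paper uses a maximum-principle argument, while your primary route is the energy identity obtained by testing against $u-\mu$.

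One correction is needed. The paper takes the sign structure of $F$ to be the \emph{same} in regions IIa and IIb, namely $F(0)=F(u^*)=0$, $F>0$ on $(0,u^*)$, and $F<0$ beyond $u^*$. (The labels ``bistable'' and ``monostable'' in Section~\ref{sub:ss} refer to the full $2\times 2$ system, not to the scalar reduction; they are not a statement about the sign of $F$.) So your assertion that ``in region IIa one has $F\le 0$ throughout,'' and the subharmonic argument you build on it, are off. This is benign, however: your choice $\mu=u^*$, which you reserve for IIb, in fact handles both regions under the correct sign hypothesis and yields the one-line proof $\int_\Omega|\nabla u|^2=\int_\Omega(u-u^*)F(u)\le 0$.

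The paper's own argument is essentially the parenthetical sketch you give for ``IIb'': assuming $u\not\equiv 0$, the strong maximum principle forces $u>0$ on $\bar\Omega$; at an interior minimum $m>0$ one has $\Delta u\ge 0$ hence $F(m)\le 0$, so $m\ge u^*$ (boundary minima are excluded by Hopf together with $\partial u/\partial\vec n=0$); the symmetric argument at the maximum gives $M\le u^*$, whence $u\equiv u^*$. Your integration-by-parts route is a genuine and slightly more robust alternative --- it avoids the preliminary strong-maximum-principle step and works verbatim in any dimension --- at the small cost of identifying the right test function. Your closing caveat, that the substantive input is the sign of $F$ and that Section~\ref{sub:ss} establishes this only partly rigorously, is accurate and applies equally to the paper's proof.
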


\begin{proof}
Under the conditions stated in the proposition any steady-state solution will satisfy:
\begin{align*}\left\{\begin{array}{l}
\Delta u + r \left(\frac{1}{h(u)-k_2}\right)\G(u) - u = 0,\\
v^*(u) = \frac{1}{h(u)-k_2}.\end{array}\right.
\end{align*}
Define $f(u) = r \left(\frac{1}{h(u)-k_2}\right)\G(u) - u$ for which, based on the chosen parameters, there exists a unique $u^*$ such that:
\[
f(0)=f(u^*)=0,\;f(u)>0\;\text{for}\;u\in (0,u^*)\;\text{and}\;f(u)<0\;\text{otherwise}.
\]
Without loss of generality assume let us assume that $u\not\equiv 0.$  By the strong maximum principle
we conclude that $u(x)>0$ for all $x\in \bar\Omega$.  Let $m=\min_{x\in\bar\Omega} u(x)>0$ and  $M=\max_{x\in\bar\Omega} u(x)>0$.  
Since $f(u)$ is positive between $(0,u^*)$ then
by the strong maximum principle and Hopf lemma we know that $m\ge u^*.$   On the other hand, $f(u)<0$ for $u>u^*$ so we can conclude, in a
similar fashion, that $M\leq u^*$, which implies that $u\equiv u^*$ and thus $v=v(u^*)$. 
\end{proof}

It would be interesting to find the parameter regimes, if such exists, where the only steady-state solutions are the constant ones for the fully-parabolic system.  
For example, when the parameters lie in the region IIa and IIb of Figure \ref{fig:bif1}, so that system \eqref{sys:non_dim_1} with $k =0$ is monostable, are the only steady-state
solutions the zero steady-state, $(0,v^*(0)),$ and the excited steady-state, $(u^*,v^*(u^*))$.  This would be a natural extension from results that are known to hold
for the single parabolic Fisher-KPP equation.

\section{The Cauchy problem}\label{sec:cauchy}
This section is devoted to the study of the Cauchy problem in $\Real^d$ for $d\ge 1$ with one triggering event. We will take the triggering event to be part of the initial condition.  
More specifically, we consider the system:
\begin{subequations}\label{sys:gen}
\begin{align}
&u_t = \Delta u + r (v)\G(u) - u,\quad\text{for}\quad x\in\Real^d,\; t>0,\label{eq:u1}\\
&v_t = D\Delta v +k \int \J(x,y)v(y)\;dy - (h(u)-k_2)v +1,\quad\text{for}\quad x\in\Real^d,\; t>0,\label{eq:v1}\\
&u(x,0)=u_0(x)\quad\text{and}\quad v(x,0) =v_0(x) +A\delta_{x=\bar x}, \quad\text{for}\quad x\in\Real^d,\; t=0.
\end{align}
\end{subequations}

Let $C_b(\Real^d)$ denote the space of continuous bounded functions on $\Real^d$ and 
consider the Banach space $\U= C_b(\Real^d)\times C_b(\Real^d)$ endowed with
with the $L^\infty-$norm.  We obtain the following global existence result. 
  
\begin{theorem}[Global solutions to the Cauchy problem]\label{thm:cauchy}
Let $(u_0(x),v_0(x)) \in \U,$ $D\ge 0,$ and 
\begin{align}\label{cond:kernel}
\sup_{x\in\Real^d}\int_{\Real^d}\J(x,y)\;dy<\infty.
\end{align} 
Then, there exists a unique, positive, and global solution in time
$(u(x,t),v(x,t))\in \U$ to \eqref{sys:gen}. 
\end{theorem}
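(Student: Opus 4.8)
The plan is to write \eqref{sys:gen} in mild (Duhamel) form, solve it locally in time by a contraction mapping in $\mathcal{U}$, and then produce a priori $L^\infty$ bounds that are uniform in space and finite on every bounded time interval; this rules out blow-up and promotes the local solution to a global one, while uniqueness is built into the contraction step. A structural feature I would lean on throughout is that $v$ enters both right-hand sides \emph{affinely}, so that once $u$ is known the $v$-equation is a \emph{linear} (nonlocal) parabolic equation with bounded coefficients.

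Concretely, let $T_1(t)=e^{t(\Delta-I)}$ and $T_2(t)=e^{tD\Delta}$ (the identity when $D=0$), which are contraction $C_0$-semigroups on $C_b(\Real^d)$, and set $N_1(u,v)=r(v)G_\alpha(u)$, $N_2(u,v)=k\int_{\Real^d}\J(\cdot,y)v(y)\,dy-(h(u)-k_2)v+1$. Condition \eqref{cond:kernel} makes $v\mapsto k\int\J(\cdot,y)v(y)\,dy$ bounded on $C_b(\Real^d)$, $r$ is bounded and globally Lipschitz, $h$ is smooth and $G_\alpha$ is Lipschitz, so $N_1,N_2$ are Lipschitz on bounded subsets of $\mathcal{U}$ into $C_b(\Real^d)$. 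A solution then satisfies
\begin{align*}
u(t)&=T_1(t)u_0+\int_0^tT_1(t-s)N_1(u(s),v(s))\,ds,\\
v(t)&=T_2(t)\bigl(v_0+A\delta_{\bar x}\bigr)+\int_0^tT_2(t-s)N_2(u(s),v(s))\,ds.
\end{align*}
The Dirac initial datum is accommodated by splitting off the free evolution $T_2(t)(A\delta_{\bar x})$: when $D>0$ this is $A$ times the Gaussian of diffusivity $D$, continuous and bounded for every $t>0$ with an $O(t^{-d/2})$ singularity at $t=0$, so one runs the fixed point in curves $(0,T]\to\mathcal{U}$ controlled by the standard $L^p$-smoothing estimates for the heat semigroup (equivalently, since the $v$-equation is affine in $v$, one may solve it by linear parabolic theory for given $u$ and do the fixed point in $u$ alone, recovering $(u,v)\in\mathcal{U}$ for each $t>0$); when $D=0$ one writes $v=\tilde v+c(t)\delta_{\bar x}$ with $c'=(k_2-h(u(\bar x,\cdot)))c$, $c(0)=A$, and $\tilde v$ solving the same equation with the bounded datum $v_0$. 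Either way a Banach fixed-point argument on a short interval $[0,\tau]$, with $\tau$ controlled by the size of the data, yields a unique local solution, together with the continuation alternative: the solution is global, or it has a maximal time $T_{\max}<\infty$ with $\limsup_{t\uparrow T_{\max}}\|(u(t),v(t))\|_{\mathcal{U}}=\infty$.

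It remains to get a priori bounds, for which I take the natural sign conditions $u_0,v_0\ge0$ and $A>0$ (implicit in the statement, whose conclusion asserts positivity, and needed: for $u$ very negative the reaction $r(v)G_\alpha(u)\sim-r(v)u^2$ would force blow-up). Positivity follows from the comparison principle/quasi-positivity of the reactions: at a first zero of $u$ the reaction is $r(v)G_\alpha(0)=0\ge0$, and at a first zero of $v$ it is $k\int\J(\cdot,y)v(y)\,dy+1>0$ (using $\J\ge0$), so $u,v\ge0$. Next, $U:=\max(1,\|u_0\|_\infty)$ is a supersolution of \eqref{eq:u1}: for $U\ge1$ one has $G_\alpha(U)=(U-\alpha)(1-U)\le0$, hence $r(v)G_\alpha(U)-U\le-U<0$, giving $0\le u\le U$ for all $t\ge0$. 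Since $h$ is decreasing, $h(u)\ge h(U)=:h_*>0$, so the coefficient $k_2-h(u)$ is bounded above by $k_2-h_*=:\mu$; comparing with the spatially constant $\bar v$ solving $\bar v'=\lambda\bar v+1$, $\lambda:=k\sup_x\int\J(x,y)\,dy+\mu_+$, with initial value $\bar v(\varepsilon)=\|v(\varepsilon)\|_\infty$ for some $\varepsilon>0$ (finite by the local theory), on $[\varepsilon,\infty)$ shows $0\le v(x,t)\le\bar v(t)<\infty$ on every bounded time interval. These bounds are incompatible with $\limsup_{t\uparrow T_{\max}}\|(u,v)\|_{\mathcal{U}}=\infty$ at finite $T_{\max}$, so $T_{\max}=\infty$; parabolic regularity then upgrades the mild solution to a classical one for $t>0$.

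The main obstacle is the sign-indefinite zero-order coefficient $-(h(u)-k_2)$ in \eqref{eq:v1}: where rioting activity is high one has $h(u)<k_2$ and this term pumps energy into $v$, so a priori one cannot exclude unbounded growth of $v$ (and, through $r(v)$, feedback into $u$). The argument closes because $u$ is controlled by \emph{its own} supersolution $\max(1,\|u_0\|_\infty)$ — a consequence of the ignition/logistic shape of $G_\alpha$, which is negative beyond the carrying capacity, and independent of $v$ — which keeps $h(u)$ uniformly away from $0$ and degrades the potential blow-up of $v$ to merely exponential, hence finite-on-compacts, growth handled by Gronwall's inequality; the nonlocal term causes no trouble thanks precisely to \eqref{cond:kernel}. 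The only other delicate point is reconciling the Dirac initial datum with a statement set in $\mathcal{U}$, which is the role of the splitting indicated above (for $D=0$ the solution $v$ genuinely retains a point mass and the conclusion is to be read for the regular part $\tilde v$).
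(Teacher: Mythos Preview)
Your proposal is correct and follows essentially the same route as the paper: mild/Duhamel formulation, a contraction mapping for local existence, the continuation alternative, and then a priori $L^\infty$ bounds obtained by using the logistic/ignition structure of $G_\alpha$ to bound $u$ by a constant supersolution and a Gronwall-type exponential bound for $v$ via \eqref{cond:kernel}. If anything, you are more careful than the paper about two points it glosses over---the positivity argument and the handling of the Dirac initial datum (particularly when $D=0$)---and your bound on $v$ correctly accounts for the source term and the sign-indefinite coefficient $-(h(u)-k_2)$, whereas the paper's stated bound $v\le v_0 e^{Mt}$ is a bit sloppy on those counts.
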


\begin{proof}
The proof is done in two steps.  In the first step we prove the local existence of a solution in a suitable subspace of $\U$. 
The second step is a simple continuation argument.  \\
\\
{\it Step 1:} (Local existence) 
The solution to \eqref{sys:gen} with $D>0$ can be re-written as:
\begin{subequations}\label{sys:newform}
\begin{align}
&u(x,t) = e^{-t} \K(x,t)\ast u_0(x) + \int_0^t e^{-t+s} \K(x,t-s)\ast r (v(x,s)) G_\alpha(u(x,s))\;ds,\label{def:u} \\
&v(x,t)= \K_{D}(x,t)\ast v_0(x) +\int_0^t \K_D(x,t-s)\ast \left(k\int_\Real \J(x,y) v(y,s)\;dy -(h(u(x,s))-k_2)v(x,s)+1\right)\;ds. \label{def:v}
\end{align}
\end{subequations}
where $\K$ is the heat kernel with diffusivity constant one and $\K_D$
is the heat kernel with diffusivity constant $D$.  If $D=0$ then \eqref{def:v} instead has the form:
\begin{align}\label{def:d_0}
&v(x,t)= v_0(x) +\int_0^t \left(k\int_\Real \J(x,y) v(y,s)\;dy -(h(u(x,s))-k_2)v(x,s)+1\right)\;ds. 
\end{align}

Let $\T[(u,v)]:=(\T_1(u,v),\T_2(u,v))$ be a map on $\U$ defined by:  
\begin{align}
\T_1(u,v) := &\;e^{-t} \K(x,t)\ast u_0(x) + \int_0^t e^{-t+s} \K(x,t-s)\ast r (v(x,s)) G_\alpha(u(x,s))\;ds,\\
\T_2(u,v) := &\; \K_{D}(x,t)\ast v_0(x)+\int_0^t \K_D(x,t-s)\;ds \notag \\
&+\int_0^t \K_D(x,t-s)\ast \left(k \int \J(x,y) v(y,s)\;dy-(h(u(x,s))-k_2)v(x,s)\right)\;ds. 
\end{align}
If $D=0$ then we instead set: 
\begin{align}
\T_2(u,v) :=  v_0(x) +\int_0^t \left(k\int_\Real \J(x,y) v(y,s)\;dy -(h(u(x,s)-k_2)v(x,s)+1\right)\;ds. 
\end{align}

Our goal is to find a subset of $\U$ that is invariant under the transformation $\T$. For this purpose, given $T,R>0$ define the set: 
\[
X_{R,T} = \set{(u,v) \in \U: \norm{(u,v)}_{X_T}\leq R},
\]
where, 
\[
\norm{(u,v)}_{X_T} =  \norm{u(t)- e^{-t} \K(x,t)\ast u_0(x)}_\infty+\norm{v(t)-v_0(x) -t}_\infty,
\]
if $D=0.$ On the other hand, if $D>0$ then we instead use the norm: 
\[\norm{(u,v)}_{X_T} \hspace{-4pt}=  \norm{u(t)- e^{-t} \K(x,t)\ast u_0(x)}_\infty\hspace{-3pt}+\norm{v(t)-\K_{D}(x,t)\ast v_0(x)-\hspace{-3pt}\int_0^t \hspace{-4pt}\K_D(x,t-s)\;ds}_\infty\hspace{-8pt}.\]

Take $(u,v)\in X_{R,T},$ to show invariance we obtain the following estimate with the use of Young's inequality for convolutions:
\begin{align}\label{est:1}
\norm{\T_1(u,v)- e^{-t} \K(x,t)\ast u_0(x)}_\infty &\leq \int_0^t e^{-t+s}\norm{\K(x,t-s)\ast r (v(x,s)) G_\alpha(u(x,s))}_\infty\;ds \notag\\
& \leq \int_0^t e^{-t+s}\norm{\K(x,t-s)}_1\norm{r (v(x,s)) G_\alpha(u(x,s))}_\infty\;ds\notag \\
& \leq C\int_0^t e^{-t+s}\norm{r (v(x,s)) G_\alpha(u(x,s))}_\infty\;ds\notag \\
& \leq C R (1-e^{-t}), 
\end{align}
as $\norm{r (v(x,s)) G_\alpha(u(x,s))}_\infty \leq \abs{G'_\alpha}\norm{u}_\infty\leq CR$.  Similarly, for $\T_2$ for $D\ge 0$ we obtain the estimate:
\begin{align}\label{est:2}
\norm{\T_2(u,v)- \K_{D}(x,t)\ast v_0(x)-\hspace{-3pt}\int_0^t \hspace{-3pt}\K_D(x,t-s)\;ds}_\infty 
\vspace{-12pt}&\leq C\int_0^t  \norm{\int_{\Real^d} \J(x,y) v(y,t) dy}_\infty\vspace{-32pt}ds\notag \\
&\quad-\int_0^t(h(u(x,s))-k_2)v(x,s)\;ds\notag\\
& \leq C\left(\sup_{x\in\Real^d}\int_{\Real^d}\J(x,y)\;dy \right)\norm{v}_\infty t\notag\\
& \leq CRt,
\end{align}
where we have used condition \eqref{cond:kernel} and the fact that for $\norm{u}_\infty$ sufficiently small then $h(u)-k_2>0.$
From \eqref{est:1} and \eqref{est:2} we observe that for $T$ sufficiently small
$T[u,v]\in X_{T,R}$. 
To show that $\T$ is a strictly contractive map, take $(u_1,v_1),(u_2,v_2)\in X_{T,R},$ we obtain 
\begin{align*}
&\norm{\T_1(u_1,v_1)-\T_1(u_2,v_2)}_\infty\leq \abs{G_\alpha'}\sup_{0\le t\le T}\norm{u_1-u_2}_\infty(1-e^{-t}),\\
&\norm{\T_2(u_1,v_1)-\T_2(u_2,v_2)}_\infty\leq \left(\sup_{x\in \Real^d}\int_{\Real^d} \J(x,y)\;dy+C\right) \sup_{0\le t\le T}\norm{v_1-v_2}_\infty t+C\hspace{-5pt}\sup_{0\le t\le T}\norm{u_1-u_2}_\infty t.
\end{align*}
Therefore, for $T$ sufficiently small there exists and $0<k <1$ such that: 
\[\norm{\T[u_1,v_1]-\T[u_2,v_2]}_\U\le k \norm{(u_1,v_1)-(u_2,v_2)}_\U.\]  Given that $\T$ is
a strict contraction then there exists unique $(u(x,t), v(x,t))\in \U,$ which is a fixed point of $\T$ and thus satisfies \eqref{sys:gen}.

{\it Step 2:} (Continuation of the solution)  Recall that we are looking for positive solutions.  A simple continuation argument allows us to conclude that either $T=\infty$ or
\begin{align}\label{cond:finite}
\lim_{t\rightarrow T^{-}} u(x,t) = \infty\quad \text{or}\quad \lim_{t\rightarrow T^{-}} v(x,t) = \infty.
\end{align}
Since, $u_t<0$ for any $u>u^*$ we see that $\norm{u(x,t)}_\infty\leq \max\set{\norm{u_0(x)}_\infty, u^*}$.  On the other hand,
$v(x,t)\leq v_0(x)e^{Mt}$ where 
\[
M:=\sup_{x\in \Real^d} \int J(x,y)\;dy. 
\]  
This rules out \eqref{cond:finite} and thus the solutions exists for all time.   
\end{proof}

%

\section{Traveling wave solutions without non-local effects}\label{sec:tw}
From the point of view of the modeling it is of particular interest to look at the invasion of protests into regions that are experiencing low levels of protesting or rioting activity.  
A benefit of modeling riots with system \eqref{sys:non_dim} is that a wave of rioting activity or protests that spread geographically
is represented by {\it traveling wave solutions}, {\it i.e.} solutions that propagate in a given direction with a constant profile.  
In the context of the model we refer to these solutions as {\it upheaval waves} or {\it protests waves}.
In fact, for the local model ($k =0$) we can determine what parameter regimes lead to such
solutions for system \eqref{sys:gen} with $k =0$.  To be more precise, an {\it upheaval wave} is given by $u(x,t)=\phi(x-ct)$ and
$v=\psi(x-ct)$ with $c\in \Real$ which satisfy:
\begin{align}\label{sys:traveling_wave}
\left\{\begin{array}{l}
\phi''(z)+ c\phi'(z) +\Phi(\phi(z),\psi(z))=0, \\
\psi''(z)+c\psi'(z) + \Psi(\phi(z),\psi(z)) = 0,\\
0\leq \phi(z)\leq u^*,\; v^*(0)\leq \psi(z)\leq v^*(u^*),\\
\phi(-\infty) = u^*,\;\phi(+\infty) = 0,\psi(-\infty) = v^*(u^*),\;\psi(+\infty) = v^*(0),
\end{array}\right.
\end{align}
for $z=x-ct$.  In system \eqref{sys:traveling_wave}, $(u^*,v^*(u^*))$ represents the excited constant steady-state solution and $(0,v^*(0))$ represents the non-excited steady-state solution, 
with no rioting activity and the lowest social tension level.  
It is useful to define the following function:  
\[
g_\alpha(u) := r\left(\frac{1}{h(u)-k_2}\right) G_\alpha (u),
\]
as the speed of the upheaval waves will depend on the quantity:
\begin{align}\label{def:sign}
F(u):=\int_0^{u} g_\alpha(s)\;ds. 
\end{align}
We first state a result about the eventual extinction of rioting activity when the parameters
are chosen from regions I of Figure \ref{fig:bif1} and Figure \ref{fig:bif2}.  Of course, this implies the non-existence of 
upheaval waves.  
\begin{lemma}[Extinction of rioting activity] \label{lem:no_rev}
Let $k=0,$ $\alpha =1$ or $0\le \alpha$ and $\rho,\beta$ chosen from regions $I$ of Figures \ref{fig:bif1}-\ref{fig:bif2}.  
 For any initial data $(u_0(x),v_0(x))\in L^1(\Real^d)\times L^1(\Real^d)$ and source term:
\[
s(x,t) = A\sum_{i=1}^N \delta_{t=t_i,x=x_i},
\] 
the solution $(u(x,t),v(x,t))$ to \eqref{sys:non_dim_1} approaches $(0,v^*(0)
)$ as $t\rightarrow \infty$.  Specifically,
\[
u(x,t)\rightarrow 0\;\text{and}\;v(x,t)\rightarrow v^*(0),
\]
as $t\rightarrow \infty,$ uniformly in $x.$
\end{lemma}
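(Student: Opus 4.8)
The plan is to deduce the two convergences from a short sequence of scalar comparison arguments, using crucially that the source $s$ is concentrated at the finitely many times $t_1,\dots,t_N$, so that the system is source‑free for $t>t_N:=\max_i t_i$. Throughout I will use that, by Theorem~\ref{thm:cauchy} and its proof, the solution is nonnegative and $u$ is globally bounded — in fact $\|u(\cdot,t)\|_\infty\le\max\{\|u_0\|_\infty,1\}$, since $\Phi(u,v)\le-u<0$ for $u\ge1$ — and that by parabolic regularization I may assume $u_0,v_0\in L^\infty$ (replacing the initial time by any $\tau>0$ otherwise), consistent with \eqref{cond:ic}.

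\emph{Decay of $u$.} First I would show that the hypothesis on $(\rho,\beta)$ — namely region~I, or $\alpha=1$ — makes the $u$‑reaction uniformly dissipative: since $r(v)\le\rho$ for every $v$ and $\G(u)\le0$ for $u\ge0$ with $u\notin(\alpha,1)$, and since in region~I the self‑reinforcement parameter is small enough that $\rho\,\G(u)<u$ throughout $(\alpha,1)$ (for $\alpha=1$ one has $\G\le0$ and this is vacuous), a compactness argument produces $\delta\in(0,1]$ with $\Phi(u,v)=r(v)\G(u)-u\le-\delta u$ for all $u\ge0$, $v\ge0$. Then $\bar u(t):=\|u_0\|_\infty e^{-\delta t}$ is a spatially constant supersolution of \eqref{eq:u1}, since $\bar u_t-\Delta\bar u=-\delta\bar u\ge\Phi(\bar u,v(x,t))$; as the source does not enter the $u$‑equation, comparison gives $0\le u(x,t)\le\|u_0\|_\infty e^{-\delta t}$, so $u(\cdot,t)\to0$ uniformly in $x$.

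\emph{Trapping $v$ between two relaxing barriers.} For the lower bound, $h(u)\le h(0)=1$, $v\ge0$ and $s\ge0$ imply that $\underline v(t):=v^*(0)\bigl(1-e^{-(1-k_2)t}\bigr)$, the solution of $\underline v'=-(1-k_2)\underline v+1$ with $\underline v(0)=0\le v_0$, is a subsolution of the $v$‑equation (one checks $\underline v_t-D\Delta\underline v+(h(u)-k_2)\underline v-1-s=(h(u)-1)\underline v-s\le0$), so $v(x,t)\ge\underline v(t)$ and $\liminf_{t\to\infty}\inf_x v(x,t)\ge v^*(0)$, where $v^*(0)=1/(1-k_2)$ is finite since $k_2<1$. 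For the upper bound, fix $\epsilon\in(0,1-k_2)$; since $u(x,t)\le\mu(t):=\|u_0\|_\infty e^{-\delta t}\to0$ and $h$ is decreasing with $h(0)=1$, there is $T_\epsilon>t_N$ with $h(u(x,t))-k_2\ge h(\mu(t))-k_2\ge(1-k_2)-\epsilon=:\lambda_\epsilon>0$ for all $x$ and $t\ge T_\epsilon$. On $[T_\epsilon,\infty)$ the source is gone and $v_t\le D\Delta v-\lambda_\epsilon v+1$, so the constant-in-$x$ solution $\bar v_\epsilon(t)$ of $\bar v_\epsilon'=-\lambda_\epsilon\bar v_\epsilon+1$ with $\bar v_\epsilon(T_\epsilon)=\|v(\cdot,T_\epsilon)\|_\infty$ — finite by the local theory: for $D>0$ by smoothing, for $D=0$ since $v$ solves an ODE between the finitely many point sources — dominates $v$, giving $\limsup_{t\to\infty}\sup_x v(x,t)\le1/\lambda_\epsilon=1/(1-k_2-\epsilon)$. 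Letting $\epsilon\downarrow0$ yields $\limsup_{t\to\infty}\sup_x v\le v^*(0)$, which with the lower bound gives $v(\cdot,t)\to v^*(0)$ uniformly in $x$; together with the decay of $u$ this proves the claim.

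The hard part, to my mind, is the uniform bound $\Phi(u,v)\le-\delta u$ in the first step: region~I is described in Section~\ref{sub:ss} through the constant steady states, i.e. through $r$ evaluated at $v^*(u)$, so one must check that ``weak self‑reinforcement'' really forces $\rho\,\G^{+}(u)<u$ on $(0,1)$ — equivalently $\rho<\inf_{u\in(\alpha,1)}u/[(u-\alpha)(1-u)]$ — which is immediate for $\alpha=1$ but for $0\le\alpha<1$ is exactly what membership in region~I should encode. A minor technical point, the $L^\infty$‑boundedness of $v$ at intermediate times in the degenerate case $D=0$ with measure-valued sources, is absorbed by the standing assumption $v_0\in L^\infty$ and the fact that each point source adds only a finite amount to $v$.
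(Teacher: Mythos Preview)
Your argument is correct and follows essentially the same strategy as the paper: first extract the uniform dissipativity $\Phi(u,v)\le-\delta u$ from the region~I assumption to force $u\to0$ exponentially via a scalar supersolution, then feed this into the $v$-equation to obtain convergence to $v^*(0)$. The only cosmetic difference is that the paper handles the $v$-step by writing $w=v-v^*(0)$ and noting the forcing term tends to zero, whereas you sandwich $v$ between explicit ODE barriers---your version is in fact a bit more complete.
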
	
Lemma \ref{lem:no_rev} has the simple interpretation that in a fully censored state upheaval waves will never exist.
At the same time even as the restriction of information decreases, if $\rho$ is not sufficiently large there is also 
no possibility of an upheaval wave taking off.  How large $\rho$ has to be depends on the restriction of information; the 
higher the restriction of information the higher $\rho$ needs to be in order for a riot to get momentum.    

\begin{proof}(Lemma \ref{lem:no_rev})
First assume that there is only one triggering event, in which case we assume that the source term can be taken as 
part of the initial condition and $s(x,t)=0.$
Let $\beta$ and $\rho$ satisfy the hypothesis of the lemma, there exists an $\tau>0,$ which is sufficiently small, such that
$r(v)G_\alpha(u)-u\le -\tau u.$  For example, when $\alpha = 1$ then $\tau = 1$.  Let 
\begin{align}\label{def:u}
\bar u(x,t)=e^{-\tau t}u_0(x)\ast \K(x,t),
\end{align}
where $\K$ is the heat kernel.  Note that \eqref{def:u} satisfies $\bar u_t = \Delta \bar u-\tau \bar u.$  Thus, any solution $u(x,t)$ of \eqref{sys:non_dim_1} is bounded above by $\bar u(x,t)$
for all $t>0$. 
Moreover, we have the bound
\[
\norm{u(x,t)}_\infty \le e^{-\tau t}\norm{u_0(x)\ast \K(x,t)}_\infty\le C(\norm{u_0(x)}_\infty,\norm{\K}_1) e^{-\tau t}. 
\]  
In particular, $\norm{u}_\infty\rightarrow 0 $ as $t\rightarrow \infty$ uniformly in $x$. 
Furthermore, note that $h(u)\ge h(\norm{u}_\infty)$ since $h(u)$ is a decreasing function of $u$.  Let $w = v-v^*(0) = v-\frac{1}{h^*(0)-k_2},$
then $w$ satisfies:
\[
w_t =D\Delta w -(h(u)-k_2)w+\left(1-\frac{h(u)-k_2}{h(0)-k_2}\right)+s(x,t). 
\]
Using \eqref{def:u} we obtain that 
\[
\lim_{t\rightarrow \infty}\norm{1-\frac{h(u)-k_2}{h(0)-k_2}}_\infty = 0,
\]
also uniformly in $x$ since $u$ converges uniformly.  Thus, we observe that $\norm{w(x,t)}\rightarrow 0$ as $t\rightarrow \infty$ uniformly in $x.$ 
The result for the case when there are $N$ triggering events follows by the argument above by taking the initial conditions to be the $u_0(x)=u(x,t_N)$ and $v_0(x)=v(x,t_N)$. 
\end{proof}

On the other end of a the spectrum, as the restriction of information begins to be lifted there are certain parameter
regimes that will allow the existence of revolutionary waves.  This is expressed in the following result.

\begin{theorem}[Upheaval waves and restriction of information]\label{thm:traveling_wave} 
Let $0\leq \alpha <1.$
\begin{enumerate}
\item For $\rho$ and $\beta$ in regions IIa, IIIa, or IIIb of Figure \ref{fig:bif1} or regions $IIa$ and $IIb$ of Figure \ref{fig:bif2},
the system \eqref{sys:traveling_wave} admits a unique solution $(\phi(z),\psi(z),c^*)$ up to translations. 
Furthermore, 
\begin{itemize}
\item[(i)] $sgn(c^*)=sgn(F(\stu)),$ where $F(u)$ is defined by \eqref{def:sign}. 
\item[(ii)] $\phi'(z)< 0$ and $\psi'(z)<0$ for all $\abs{z}<\infty$.  
\end{itemize} 
\item If $\alpha=0$ and $\rho$ and $\beta$ lie in region IIb of Figure \ref{fig:bif2} then \eqref{sys:traveling_wave} admits a unique $c^*\in \Real^+$ and
solutions $(\phi_c(z),\psi_c(z))$ with $z=x-ct$ for any $c\geq c^*$. 
On the other hand, when $c<c^*$, such solutions do not exist.
\end{enumerate}
\end{theorem}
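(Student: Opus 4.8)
The plan is to view \eqref{sys:traveling_wave} as a travelling‑wave problem for a \emph{cooperative} parabolic system on an invariant rectangle, and to apply the corresponding existence/uniqueness theory after checking the structural hypotheses dictated by the steady‑state classification of Section~\ref{sub:ss}. First I would record the order structure. Set $\mathcal{R}:=[0,u^*]\times[v^*(0),v^*(u^*)]$. On $\mathcal{R}$ one has $\Phi_v=r'(v)G_\alpha(u)\ge 0$ (since $u^*<1$ forces $G_\alpha\ge 0$) and $\Psi_u=-h'(u)\,v\ge 0$ (since $v>0$), so $(\Phi,\Psi)$ is cooperative on $\mathcal{R}$; moreover $\Phi(0,v)\equiv 0$ and, using monotonicity of $r$ together with the definition of $v^*$, the constants $(0,v^*(0))$ and $(u^*,v^*(u^*))$ are respectively a sub‑ and a supersolution, so $\mathcal{R}$ is invariant under the flow and under the comparison principle of Section~\ref{sec:analysis}. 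The equilibria of $(\Phi,\Psi)$ inside $\mathcal{R}$ are exactly the points $(u_c,v^*(u_c))$ with $f(u_c)=0$, where $f(u):=g_\alpha(u)-u$ with $g_\alpha$ as in \eqref{def:sign}; which of these occur, and their stability, is precisely what the regions of Figures~\ref{fig:bif1}--\ref{fig:bif2} encode.

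\textbf{Part 1 (bistable / ignition regime).} In the listed regions $f$ has the profile $f(0)=f(u^\dagger)=f(u^*)=0$ with $0<u^\dagger<u^*$, $f<0$ on $(0,u^\dagger)$, $f>0$ on $(u^\dagger,u^*)$, and $f'(0)<0$, $f'(u^*)<0$ (when $0<\alpha<1$ in fact $f\equiv -u$ near $0$); equivalently $(0,v^*(0))$ and $(u^*,v^*(u^*))$ are linearly stable and $(u^\dagger,v^*(u^\dagger))$ is unstable for the reaction dynamics. Given this, existence of a front connecting $(u^*,v^*(u^*))$ at $-\infty$ to $(0,v^*(0))$ at $+\infty$, with a unique speed $c^*$ and profile unique up to translation, follows from the theory of bistable travelling waves for cooperative reaction--diffusion systems: concretely I would build ordered sub/super‑solutions in a moving frame $z=x-ct$, run a monotone iteration to obtain a front for each admissible $c$ in a suitable interval, and select the unique $c^*$ by a squeezing/uniqueness argument exploiting the linear stability of the two endpoints (alternatively, cite the known result). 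The degenerate case $D=0$ is handled separately: there the $v$‑equation along the wave is a first‑order ODE, making the system only partially degenerate, and the same construction applies; or one obtains the wave for $D>0$ and passes to the limit $D\downarrow 0$ using uniform estimates. For statement~(ii), differentiating the profile system shows $(\phi',\psi')$ solves a linear cooperative system with $(\phi',\psi')\le(0,0)$ and $\not\equiv(0,0)$, and the strong maximum principle (available by cooperativity) upgrades this to $\phi'<0$, $\psi'<0$ on all of $\mathbb{R}$. For statement~(i), multiplying the $\phi$‑ and $\psi$‑equations by $\phi'$ and $\psi'$, integrating over $\mathbb{R}$ and adding gives $c^*\!\int(\phi'^2+\psi'^2)=-\!\int\Phi(\phi,\psi)\phi'-\!\int\Psi(\phi,\psi)\psi'$; evaluating the right‑hand side along the profile and using that $v=v^*(u)$ is the unique, stable fibre equilibrium of the $v$‑dynamics reduces the sign to that of $\int_0^{u^*}g_\alpha(s)\,ds=F(u^*)$. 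More robustly, $\operatorname{sgn}(c^*)$ can be pinned down by comparison: if $F(u^*)>0$ one exhibits a compactly supported, rightward‑moving subsolution forcing $c^*>0$ and symmetrically if $F(u^*)<0$, while $F(u^*)=0$ yields a stationary front.

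\textbf{Part 2 (monostable regime).} When $\alpha=0$ and the parameters lie in the indicated region, $G_0'(0)=1$ gives $\Phi_u(0,v^*(0))=r(v^*(0))-1>0$, so $(0,v^*(0))$ is now an \emph{unstable} equilibrium of the (still cooperative) reaction dynamics while $(u^*,v^*(u^*))$ remains stable: the problem is of monostable (KPP) type on $\mathcal{R}$. I would then invoke the travelling‑wave theory for cooperative monostable systems: there is a minimal speed $c^*>0$ such that monotone fronts exist for every $c\ge c^*$ and none for $c<c^*$, with $c^*$ given by the variational formula $c^*=\inf_{\lambda>0}\lambda^{-1}\Lambda(\lambda)$, $\Lambda(\lambda)$ the principal eigenvalue of $\lambda^2\operatorname{diag}(1,D)+M_0$ and $M_0$ the Jacobian of $(\Phi,\Psi)$ at $(0,v^*(0))$. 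Here $M_0$ is triangular (because $\Phi_v=r'(v)G_0(0)=0$ at that state), so $\Lambda(\lambda)$ reduces to a scalar computation and $c^*=2\sqrt{r(v^*(0))-1}$ whenever the KPP sublinearity $g_0(u)\le g_0'(0)u$ holds on $(0,u^*)$ (and $c^*$ exceeds this value in general). Existence for $c\ge c^*$ again comes from tilted‑exponential sub/super‑solutions and monotone iteration; non‑existence for $c<c^*$ follows from the standard argument that a positive front must decay at $+\infty$ at a rate equal to a positive root of the characteristic equation $\det(\lambda^2\operatorname{diag}(1,D)-c\lambda I+M_0)=0$, which has no admissible root for $c<c^*$, contradicting $0\le\phi,\psi$. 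As before, $D=0$ is treated directly.

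\textbf{Main obstacle.} The difficulty is that this is genuinely a system, so no classical scalar result (Fife--McLeod, Aronson--Weinberger) applies off the shelf; the real work lies in (a) verifying cooperativity on an invariant rectangle \emph{and} the correct hyperbolicity of the two end states uniformly over each parameter region of Figures~\ref{fig:bif1}--\ref{fig:bif2} (which is exactly the content of Section~\ref{sub:ss}); (b) handling the partially degenerate case $D=0$; and (c) making the identification $\operatorname{sgn}(c^*)=\operatorname{sgn}(F(u^*))$ rigorous, since the naive energy identity for the system produces $\int\Phi\phi'+\int\Psi\psi'$ rather than $F(u^*)$, so one must exploit the slaving structure of the $v$‑equation (or a tailored comparison function) to reduce it to $F(u^*)$.
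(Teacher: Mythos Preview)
Your overall strategy matches the paper's: verify that $(\Phi,\Psi)$ is cooperative on the relevant rectangle, check the stability type of the constant equilibria in each parameter region (the paper does this by computing the trace and determinant of the Jacobian $\vec F'$ at each steady state rather than via your scalar reduction to $f$, but the conclusion is the same), and then invoke Volpert's theory (Theorems~2.1 and~2.2 of \cite{Volpert1994}) for existence, uniqueness of the speed, and monotonicity in the bistable and monostable cases respectively.

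The one substantive difference is in the argument for~(i). Your primary suggestion, the energy identity $c^*\!\int(\phi'^2+\psi'^2)=-\!\int\Phi\,\phi'-\!\int\Psi\,\psi'$, does not reduce to $F(u^*)$ in any obvious way (as you yourself flag in your obstacle~(c)), and your fallback of compactly supported sub/super-solutions is left unspecified. The paper instead makes a concrete comparison: take the \emph{scalar} bistable front $U$ solving $U''+c'U'+g_\alpha(U)=0$, whose speed $c'$ has $\operatorname{sgn}(c')=\operatorname{sgn}(F(u^*))$ by the classical single-equation theory, and set $V:=v^*(U)$. One checks directly that $(U,V)$ is a sub/super-solution of the full system in the frame moving with speed $c^*$, and then a maximum-principle comparison with the actual front $(\phi,\psi)$ forces $\operatorname{sgn}(c^*)=\operatorname{sgn}(c')=\operatorname{sgn}(F(u^*))$. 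This is exactly the ``tailored comparison function'' you allude to at the end, and it exploits the slaving relation $v=v^*(u)$ in the simplest possible way; your proposal is correct in spirit but misses this explicit construction. Your additional remarks on the degenerate case $D=0$ and on the KPP formula for $c^*$ go beyond what the paper treats.
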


In contrast to Lemma \ref{lem:no_rev}, Theorem \ref{thm:traveling_wave} tells us that as the
restriction of information decreases in a state the possibility of the existence of upheaval waves begins to increase.  In fact, 
when $\alpha = 0$ there exists a wide parameter regime in the $\rho-\beta$ plane that allows for
the existence of traveling waves.  Even more interesting is the possibility of upheaval waves
that move with an arbitrarily large speed (depending on the initial shock).    For $0<\alpha<1$
if the growth rate $\rho$ is sufficiently large there exists unique upheaval waves
whose speed depends on \eqref{def:sign}.  On the other hand, if $\rho$ is not sufficiently large 
there is the possibility of retreating waves, where the non-excited state $(0,v^*(0))$ 
invades regions with a high level of rioting activity $(u^*,v^*(u^*))$. 

Figure \ref{fig:tw_exp} illustrates various
numerical experiments of system \eqref{sys:non_dim_1}.  Figures \ref{fig:tw1} and \ref{fig:tw2} illustrate two simulations with the exact
parameters ($\rho$ and $\beta$ in region IIa of Figure \ref{fig:bif1}) but with different initial conditions.  
In particular, the initial conditions used in the simulation illustrated by Figure \ref{fig:tw1}
is $O(e^{-x}),$ which affect a larger region, leads to a faster upheaval wave than that illustrated in Figure \ref{fig:tw2}.  In fact, in Figure \ref{fig:tw2}
the initial condition is $O(e^{-3x}).$  Figure \ref{fig:tw3} illustrates another simulation with no restriction of information, but with parameters in the 
bistable region.  In this case all initial conditions lead to waves of the same speed. 
Figures \ref{fig:tw4}-\ref{fig:tw6} illustrate simulations with some restriction of information.  Figure \ref{fig:tw4} illustrates a bistable 
upheaval wave resulting from low restriction of information where the excited state invades the non-excited state.  As the restriction of information is increased
the traveling wave slows down and eventually we obtains a stationary wave,  as is illustrated in Figure \ref{fig:tw5}, and even a retreating wave when the
restriction of information is sufficiently high (Figure \ref{fig:tw5}).  In the latter case one observes that the non-excited state 
invades the excited state.

\begin{figure}[H] 
  \center
  \subfloat[$\alpha =0$: $u_0(0) = 5e^{-x}$ ]{\label{fig:tw1}\includegraphics[width=0.45\textwidth]{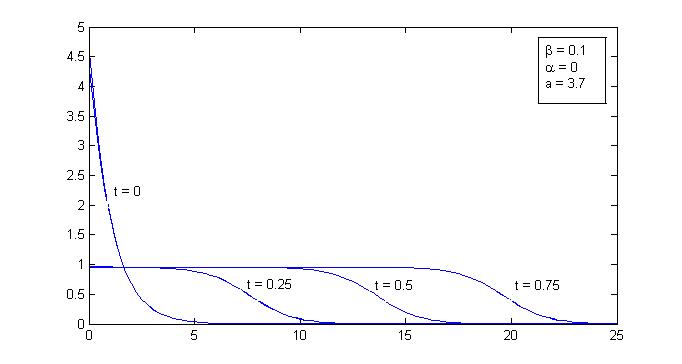}}\quad\quad\quad
 \subfloat[$\alpha =0$: $u_0(0) = 5e^{-3x}$]{\label{fig:tw2}\includegraphics[width=0.45\textwidth]{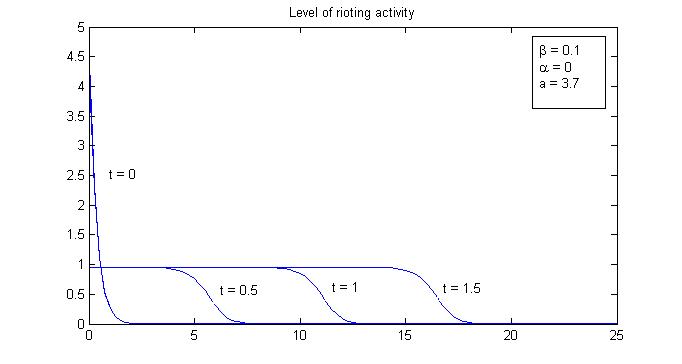}}\\
  \subfloat[$\alpha =0$, fast transition]{\label{fig:tw3}\includegraphics[width=0.45\textwidth]{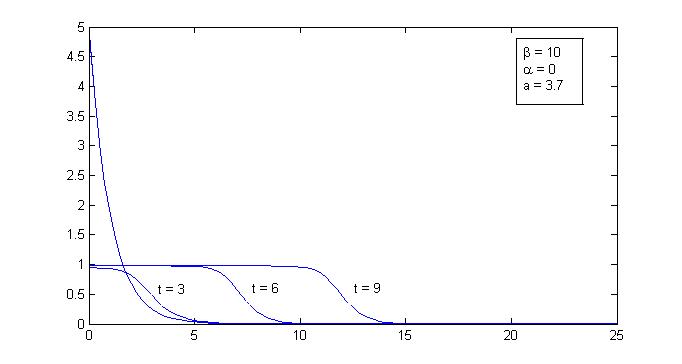}}\quad\quad
 \subfloat[$0<\alpha <1$, bistable wave]{\label{fig:tw4}\includegraphics[width=0.45\textwidth]{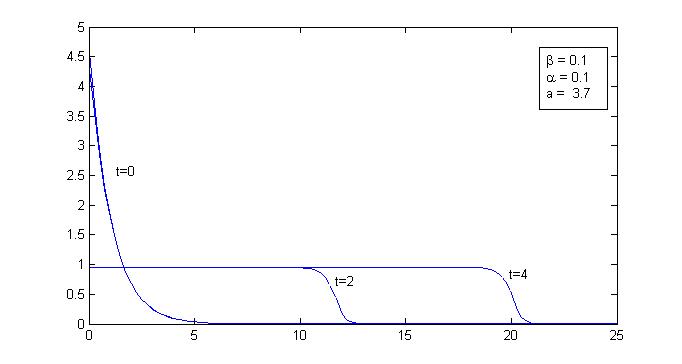}}\\
  \subfloat[$0<\alpha <1$, stationary wave]{\label{fig:tw5}\includegraphics[width=0.45\textwidth]{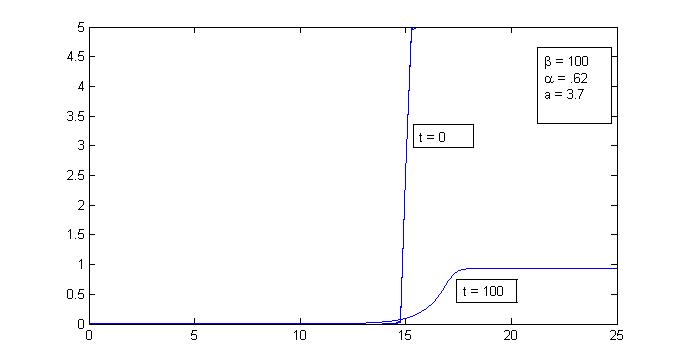}}\quad\quad
 \subfloat[$0<\alpha <1$, retreating wave]{\label{fig:tw6}\includegraphics[width=0.45\textwidth]{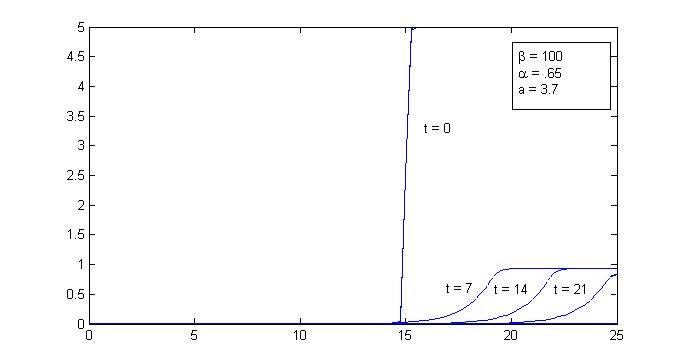}}\\
 \caption{\small{Traveling wave solutions for the level of rioting activity with various parameters and initial conditions.}\label{fig:tw_exp}}
\end{figure}

\begin{proof}(Theorem \ref{thm:traveling_wave})
We rewrite the system given by \eqref{eq:u1}-\eqref{eq:v1} in vector-matrix form:
\be\label{sys:mv}
\vec{u}=\vec F[\vec u],
\ee
where $\vec{u}=(u,v)$ and $\vec F[\vec u]=(\Phi(u,v),\Psi(u,v))$. First, note that $\Phi_v(u,v) = r'(v)G_\alpha(u)\ge 0$
for all $\alpha\in[0,1]$ since $r'(v)>0$.  Moreover,  $\Psi_u(u,v) = \frac{p}{(1+u)^{p-1}}v\ge 0$
for all $u,v\ge 0$ and so the system \eqref{sys:mv} is monotone.  Now, the stability of any constant steady-state solution $(u_c,v_c)$ to 
\eqref{sys:mv} is given by:
the sign of the eigenvalues of the matrix:
\begin{align}\label{mat}
\vec{F}'[(u_c,v_c)]=\left[\begin{array}{ll}
r(v_c)\G'(u_c)-1&r'(v_c)\G(u_c)\\
-h'(u_c)v & -(h(u)-k_2)\end{array}\right].
\end{align}
For notational simplicity we denote the trace of \eqref{mat} by:
\[\text{Tr}[(u_c,v_c)] = r(v_c)\G'(u_c)-1-(h(u)-k_2),\]
and the determinant by
\[\text{D}[(u_c,v_c)] = -(r(v_c)\G'(u_c)-1)(h(u)-k_2)+r'(v_c)\G(u_c)h(u).\]

{\it Case 1}: Let us first consider the case when the parameters are chosen from regions IIa, IIIa, or IIIb of Figure \ref{fig:bif1} or IIa-IIb of Figure \ref{fig:bif2}.
For the non-excited steady-state $(0,v^*(0))$ we have that
\[\text{Tr}[(0,v^*(0))] = \frac{\rho}{1+e^{-\beta(v^*(0)-a)}}\G'(0)-1-(1-k_2)< 0.\]
Indeed, when $\alpha =0$ we know that $\G'(0)=0$ and the inequality is clear.  For the case when $\alpha=0$ recall that $\rho$ and $\beta$ are chosen such that 
$  \frac{\rho}{1+e^{-\beta(v^*(0)-a)}}\G'(0)<2-k_2$.  Similarly, we observe that
\[\text{D}[(0,v^*(0))] = -(r(v^*(0))\G'(0)-1)(1-k_2)+r'(v^*(0))\G(0)>0.\]  Thus, the eigenvalues of
$\vec{F}'[(0,v^*(0))]$ lie on the left-half plane.  Now, for the excited state $(u^*,v^*(u^*))$ we have that
\[\text{Tr}[(u^*,v^*(u^*))] = r(v^*(u^*))\G'(u^*)-1-(1-k_2)<0,\]
since $\G'(u^*)<0$ and by the same token
\[\text{D}[(u^*,v^*(u^*))] = -(r(v^*(u^*))\G'(u^*)-1)(1-k_2)+r'(v^*(u^*))\G(u^*)>0.\]
Again, from this it is clear that the eigenvalues of $\vec{F}'[(u^*,v^*(u^*))]$ lie on the left-half plane.  For parameters in the region IIa on Figure \ref{fig:bif1} there are only two steady-states. 
In the remaining regions there is a warm state $(u_w,v^*(u_w))$ such that
\[\text{Tr}[(u_w,v^*(u_w))] = \frac{\rho}{1+e^{-\beta(v^*(u_w)-a)}}\G'(u_w)-1-(1-k_2)>0,\] since $\G'(u_w)$ is sufficiently large so that
$  \frac{\rho}{1+e^{-\beta(v^*(u_w)-a)}}\G'(u_w)>2-k_2$.  Thus, at least one of the eigenvalues of $\vec{F}'[(u_w,v^*(u_w))]$ lie on the right-half plane.

{\it Case 2:}  Let us now consider the case when the parameters are chosen from region IIb in Figure \ref{fig:bif2}.  In this case, there are two steady-state
solutions $(0,v^*(0))$ and $(u^*,v^*(u^*))$.  The analysis for the excited state is the same as above so that all of the eigenvalues of the Jacobian evaluated 
at this steady-state lie on the left-half plane.  However, for $(0,v^*(0))$ the parameters are such that $\frac{\rho}{1+e^{-\beta(v^*(0)-a)}}\G'(0)>2-k_2$ so that
\[\text{Tr}[(0,v^*(0))] >0.\]  This implies that at least one of the eigenvalues of the Jacobian evaluated at this steady state lies on the right-half plane.   
 
Thus, the existence and monotonicity of the traveling wave solutions $(\phi(z),\psi(z))$ that satisfy \eqref{sys:traveling_wave} now follow directly from 
Theorem 2.1 in \cite{Volpert1994} for {\it 1} and Theorem 2.2 for {\it 2}
and we are left to verify $(i)$.  For this purpose, 
let $U(z)$ be a traveling wave solution with speed $c'$ satisfying 
\begin{align}\label{eq:tw_sing}
\left\{\begin{array}{l}
U''+c'U'+g_\alpha(U) = 0,\\
0\leq U\leq u^*,\\
U(-\infty) = u^*,\;U(+\infty) = 0.
\end{array}\right.
\end{align}
Note that such a solution exists and that the speed $c'$ depends on $\F(u^*),$ both in sign and in magnitude.  We also know that
$U(z)$ is monotone decreasing.  
Now, let $V = v^*(U)$ so that 
\begin{align}\label{prop:v}
\lim_{z\rightarrow -\infty} V(z) = v^*(u^*),\;\lim_{z- \infty}V(z) = v(0),\;\text{and}\;\Psi(V,U) = 0.  
\end{align}

Without loss of generality assume that $c' >0$ and assume for contradiction that $c^* \le 0.$  Now, define $w(z,t)=u(z+ct, t),$
$b(z,t) = v(z+ct,t),$ and the operator
\begin{align}
\N\bf v: = \left\{\begin{array}{l}
w_t -w_{zz}-c^* w_z-\Phi(b,w),\\
b_t - b_{zz} -c^*b_z-\Psi(b,w).\end{array}\right.
\end{align}
For any ${\bf v} :=(w,b)$ which satisfies \eqref{sys:traveling_wave} it holds that $\N\bf v=0.$  Applying the operator $\N$ to 
${\bf v}_1 = (U,V)$ gives:
\begin{align}
\N\bf v: = \left\{\begin{array}{l}
-U''-c^*U'-g_\alpha(U), \\
- V'' -c^*V'.\end{array}\right.
\end{align}
First, note that since $U$ satisfies \eqref{eq:tw_sing} we obtain that $-U''-g_\alpha(U) <0$ and $-c^*U' \le 0.$ Moreover,
 using \eqref{prop:v} we have that $-V''-c^*V' \le 0$, which can be seen by integrating the left-hand-side and noting that  
\[
-c^*(V(\infty)-V(-\infty))\le 0.
\]
Thus, we conclude that $\N{\bf v}_1 \le 0$, so that $(U,V)$ is a super-solution. 
  To exclude any issues at $\pm\infty$ let us choose $z_0\in\Real$
such that 
\begin{align}\label{cond:less}
U(z+z_0) < \phi(z) \quad\text{and}\quad V(z+z_0)<\psi(z)\quad\text{for all}\;z\leq R, 
\end{align}
for some $R\in \Real$ and such that $U(z+z_0)<\delta$ for some $\delta>0$ and all $z>R$. 
  In fact, \eqref{cond:less} must hold for all $z\in \Real.$  To observe this note that
\begin{align*}
&\phi''(z) + g_\alpha(\phi) \le 0,\\
& U''(z) + g_\alpha(U) \ge 0.
\end{align*}
Thus, if we define $m(z) = U(z)-\phi(z)$ then it satisfies $m''(z)+q(x)m(z) \ge 0$ where
\[
q(x) = \frac{g_\alpha(U) - g_\alpha(\phi)}{U-\phi}.  
\] 
Now, for parameters $\rho$ and $\beta$ in regions III of Figure \ref{fig:bif1} or regions II on Figure \ref{fig:bif2}, since $U<\delta$ 
we know that $q(x)\le 0$ in the set $\set{m\ge 0}$.  An application of the strong
maximum principle to $m''(z)+q(x)m(z) \ge 0$ on the set $\set{m > 0}$ implies that $m\le 0$ as
$m=0$ on the boundary, which is a contradiction, and so we conclude that
$m \le 0$ for all $z\in \Real$.  A similar argument works for $\phi(z)$ and $V.$  Thus, we know that
\[
U(z)\le \phi(z)\quad\text{and}\;V(z)\le \psi(z),
\] 
for all $z$.  The assumption that $c^* \le 0$ leads to a contradiction, which implies that $c^*> 0$. 
Similarly, we can conclude that if $c' = 0$ then $c^* = 0$ and if $c'<0$ then $c^*<0$.    

\end{proof}

\section{Heterogeneous environments}\label{sec:het}
As for ecological models, it is clear that the heterogeneities in the environment play a significant role on the ignition, spread, and propagation of rioting activity.  
Indeed, laws, unemployment rates, forms of government, etc. vary drastically between countries and even from state to state within a country.
To take this even further, income inequality is observed between neighborhoods that are adjacent to each other.  It is therefore important 
to analyze how these heterogeneities impact the overall failure or success of 
an upheaval wave.  In an effort to gain some perspective in this direction we perform various numerical experiments in heterogeneous environments.
In general, these heterogeneities can be quite complex and difficult to analyze and a good first approximation is the so-called {\it patch model},
a terminology borrowed from the ecology literature, see for example \cite{Maciel2013,Lam2014,Berestycki2004} and references within.  
A {\it patchy environment} refers to a fragmented environment where each fragment is
relatively homogeneous.  Specifically, we choose to work in an
environment where the heterogeneities are due to variations in the restriction of information of each region.  Thus, $\alpha(x)$ changes by country or region: we can think of 
$\alpha(x)$ as being small for less suppressive states and $\alpha(x)$ as being large for states with high restriction of information.  Through this simple
environment we are able to see a possible cause for why riots (or even revolutionary waves) skip certain countries.  Of course, we cannot conclude that this is the absolute cause;
however, it is interesting to note that a simple set of assumptions will lead to rioting activities failing in certain regions while it propagates globally. 


\subsection{Periodically fragmented environments}  
Here, we  
are interested in understanding when a patchy environment leads to the {\it persistence} of a protest or riot 
and when it leads to these protests vanishing.  For this purpose we study the following problem:
\begin{subequations}\label{times}
\begin{align}
&u_t = \Delta u + \Phi(x,u,v), \\
&v_t = D\Delta v +\Psi(u,v),
\end{align}
\end{subequations}
where $\Phi(x,u,v)$ is periodic in the spatial variable.  For the sake of generality let us assume that $x\in \Real^d$ for $d\ge 1$ and
let $L_1,L_2,...,L_d>0$ be real numbers.  A periodic function $g:\Real^d\rightarrow \Real$ is a function such that $g(x_1,...,x_k+L_k,...,x_d) = g(x_1,...,x_d)$ 
for any $k\in \set{1,...,d}$ and the period $L$ is defined by:
\[L = (0,L_1)\times(0,L_2)\times\cdots\times (0,L_d).\]
We will assume that $\Phi(x,u,v)$ and $\Psi(x,u,v)$ are periodic in $x,$ a natural example being
the case when $\Phi(x,u,v) = r(v)G_{\alpha(x)}(u) -1$ where: 
\begin{align}
\alpha(x) =\left\{ \begin{array}{ll}
\alpha_1 & x \in S_1,\\
\alpha_2 & x\in L\backslash S_1,\end{array}\right.
\end{align}
where $\alpha_1>\alpha_2$ and $S_1\subset L$.  Following the notion of stability of the non-excited state in \cite{Berestycki2004}, 
we look for the principal eigenvalue, $\lambda$, of the operator ${\bf \mathcal{L}}=(\mathcal{L}_1, \mathcal{L}_2)$ defined by:
\begin{align*}
&\mathcal{L}_1:= -\Delta u - \Phi_u(x,0,0) u - \Phi_v(x,0,0) v,\\
&\mathcal{L}_2:= -D\Delta v - \Psi_u(0,0) u - \Psi_v(0,0) v,
\end{align*}
with periodic conditions. Thus, we look for $\lambda$ such that 
there exists positive functions $\phi,\psi$ that satisfy:

\begin{align} \label{sys:pev}
\left\{
\begin{array}{l}
 -\Delta  \phi - \Phi_u(x,0,0) \phi - \Phi_v(x,0,0) \psi= \lambda \phi, \\
-\Delta \psi - \Psi_u(0,0) \phi - \Psi_v(0,0) \psi= \lambda \psi,\\
\phi,\psi>0 \quad \text{periodic}, \quad \norm{\phi}_\infty =\norm{\psi}_\infty=1. 
\end{array}\right.
\end{align}
It can be shown that there exists a unique principal eigenvalue, which is simple, that satisfies \eqref{sys:pev}.  Indeed, this follows by
noticing that replacing $\lambda$ by $\lambda+k$ leads to a cooperative system.  Then the strong maximum principle applied and an application of
the Krein-Rutman theorem gives the result. 
In the case when $\lambda<0$ we say that the non-excited state $(0,v^*(0))$ is unstable and thus we expect the existence of a positive
steady-state solution.  This is stated rigorously in the following proposition.  

\begin{prop}\label{prop:unst}
If $\lambda<0$ there exists a positive and periodic steady-state solution to \eqref{times}.  
\end{prop}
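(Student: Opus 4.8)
The plan is to produce the positive periodic steady state of \eqref{times} as the limit of a monotone iteration squeezed between an explicit sub-solution and an explicit super-solution of the stationary system
\[
\Delta u + \Phi(x,u,v) = 0, \qquad D\Delta v + \Psi(u,v) = 0,
\]
on the torus $\mathbb{T}^d := \Real^d/(L_1\mathbb{Z}\times\cdots\times L_d\mathbb{Z})$. What makes this work is that the system is cooperative, exactly as recorded in the proof of the comparison principle in Section~\ref{sec:analysis}: $\Phi_v = r'(v)G_\alpha(u)\ge 0$ and $\Psi_u = -h'(u)v\ge 0$. Hence, after adding a large constant $K$ times the identity to each equation, the nonlinearities $Ku+\Phi$ and $Kv+\Psi$ become nondecreasing in both $u$ and $v$, while $-\Delta+K$ and $-D\Delta+K$ have positivity-preserving inverses on $\mathbb{T}^d$. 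I would then fix an ordered pair, a sub-solution $(\underline u,\underline v)$ below a super-solution $(\overline U,\overline V)$, run the iteration from each endpoint (decreasing from the super-solution, increasing from the sub-solution), observe that both sequences stay trapped between the two barriers, and pass to the limit by monotone convergence together with Schauder regularity on $\mathbb{T}^d$; the limit is a periodic steady-state solution, and periodicity is automatic since the whole construction takes place on the torus. This is the scheme already used for the stability analysis of the non-excited state in \cite{Berestycki2004}.

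For the sub-solution — the step where the hypothesis $\lambda<0$ is used, and really the heart of the argument — I would first translate so that the non-excited state $(0,v^*(0))$ sits at the origin; note $\Phi$ then vanishes at the origin because $G_\alpha(0)=0$, and $\Psi$ vanishes there because $v^*(0)=1/(h(0)-k_2)$. Let $(\phi,\psi)$ be the positive periodic principal eigenfunctions of \eqref{sys:pev}, so $\norm{\phi}_\infty=\norm{\psi}_\infty=1$ and, being continuous and periodic, they are bounded below by a positive constant. Take $(\underline u,\underline v)=\delta(\phi,\psi)$ with $\delta>0$ small. A first-order expansion, together with \eqref{sys:pev}, gives
\[
\Delta\underline u + \Phi(x,\underline u,\underline v) = -\delta\lambda\,\phi + O(\delta^2), \qquad D\Delta\underline v + \Psi(\underline u,\underline v) = -\delta\lambda\,\psi + O(\delta^2),
\]
with the $O(\delta^2)$ uniform in $x$ (near the origin $\Phi$ and $\Psi$ are smooth uniformly in $x$: $G_{\alpha(x)}$ either vanishes identically near $u=0$ or equals $u(1-u)$, and $\phi,\psi$ are bounded). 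Since $\lambda<0$ and $\phi,\psi$ are bounded below by a positive constant, both right-hand sides are $\ge 0$ once $\delta$ is small enough; thus $(\underline u,\underline v)$ is a sub-solution, and it is strictly positive.

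For the super-solution I would take a constant pair $(\overline U,\overline V)$ and exploit the carrying-capacity structure of $G$. Since $v^*(u)=1/(h(u)-k_2)$ is finite and increasing on $[0,\bar u)$ with $\bar u=\frac1{\bar m}[(1/k_2)^{1/p}-1]>0$, and $r(v)G_{\alpha(x)}(\overline U)\le 0$ whenever $\overline U\ge 1$ while otherwise $r(v)G_{\alpha(x)}(\overline U)\le\rho\,G_{\alpha(x)}(\overline U)$ becomes small compared with $\overline U$ as $\overline U\uparrow\min\set{1,\bar u}$, I can fix $\overline U\in(0,\bar u)$ (independently of $x$ and $v$) so that $r(v)G_{\alpha(x)}(\overline U)-\overline U\le 0$ for all $x$ and all $v\ge 0$; setting $\overline V=v^*(\overline U)-v^*(0)>0$ then makes $\Psi(\overline U,\overline V)=0$. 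So $(\overline U,\overline V)$ is a constant super-solution, and shrinking $\delta$ once more so that $\delta<\min\set{\overline U,\overline V}$ yields $\delta(\phi,\psi)\le(\overline U,\overline V)$ pointwise, as needed. The monotone iteration then delivers a periodic steady state $(u,v)$ with $\delta(\phi,\psi)\le(u,v)\le(\overline U,\overline V)$; in particular $u,v>0$, which is the assertion of the proposition.

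I expect the one genuinely delicate point — the main obstacle — to be the construction of the constant super-solution: exhibiting a level $\overline U<\bar u$ at which $r(v)G_{\alpha(x)}(\overline U)-\overline U$ is nonpositive uniformly in $x$ and $v$ forces one to use the specific forms of $G_\alpha$ and $h$ together with the admissible parameter ranges (which also must be consistent with $\lambda<0$). A tidier packaging that avoids building the super-solution by hand is to run the parabolic flow of \eqref{times} from the datum $\delta(\phi,\psi)$: by the comparison principle the flow is monotone nondecreasing in $t$, an a priori $L^\infty$ bound of the type obtained in the proof of Theorem~\ref{thm:cauchy} keeps it bounded, and it therefore converges as $t\to\infty$ to a periodic steady state dominating $\delta(\phi,\psi)>0$ — but this still relies on the same a priori bound, so it is not really easier.
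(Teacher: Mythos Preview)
Your argument is correct and follows the same architecture as the paper's proof: a sub-solution built from a small multiple of the principal eigenfunction pair $(\phi,\psi)$ of \eqref{sys:pev}, a constant super-solution, and a classical monotone iteration exploiting the cooperativity $\Phi_v\ge 0$, $\Psi_u\ge 0$. Your Taylor-expansion justification of the sub-solution is a cleaner rendering of what the paper does with the inequalities involving $\lambda/2$.

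The one place you diverge is the super-solution, which you flag as the ``genuinely delicate point.'' The paper simply takes the excited constant steady state $(u^*,v^*(u^*))$, for which $\Phi(u^*,v^*(u^*))=0$ and $\Psi(u^*,v^*(u^*))=0$, so it is automatically a (weak) super-solution and lies above $\delta(\phi,\psi)$ for small~$\delta$. In the periodic setting with $\alpha=\alpha(x)$ this still works provided one uses the excited state associated with $\alpha_{\min}=\min_x\alpha(x)$: since $G_{\alpha}$ is nonincreasing in $\alpha$, one has $\Phi(x,u^*,v^*(u^*))=r(v^*(u^*))G_{\alpha(x)}(u^*)-u^*\le r(v^*(u^*))G_{\alpha_{\min}}(u^*)-u^*=0$ for every $x$. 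This removes the difficulty you anticipated: you do not need $r(v)G_{\alpha(x)}(\overline U)-\overline U\le 0$ for \emph{all} $v\ge 0$, only at the single value $\overline V=v^*(\overline U)$, so there is no tension with the constraint $\overline U<\bar u$ and no need for the alternative parabolic-flow packaging you sketch at the end.
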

 
\begin{proof}
Since $\lambda<0$ we can choose $k$ sufficiently small so that 
\[
\Phi(k\psi,k\phi) - \frac{\lambda k}{2}\phi > k\psi\Phi_v(0,0)+k\phi\Phi_u(0,0),
\]
and similarly,
\[
\Psi(k\psi,k\phi) - \frac{\lambda k}{2}\psi > k\psi\Psi_v(0,0)+k\psi\Phi_u(0,0),
\]
Then multiplying the top equation of \eqref{sys:pev} by $k$ gives: 
\begin{align}\label{ineq:sub}
&-k\Delta  \phi - \Phi(k\psi,k\phi) - \frac{\lambda}{2} \phi \le 0,	\\
&-k\Delta  \phi - \Psi(k\psi,k\phi) - \frac{\lambda}{2} \phi \le 0.
\end{align}
The inequalities \eqref{ineq:sub} tells us that for $k$ sufficiently small $(k\psi(x),k\phi(x))$
is a sub-solution.  Moreover, $(u^*,v^*(u^*))$ is a super-solution.  Finally, as $(u^*,v^*(u^*))>(k\psi(x),k\psi)$
then, since the system is cooperative, a classical iteration scheme yields a periodic solution $(p(x),q(x))$ with the property that 
$k\phi(x)\leq p(x)\le u^*$ and $k\psi(x)\le q(x)\le v(u^*).$
\end{proof}

As noted earlier the notion of traveling wave solutions is important because these solutions represent 
the spread of high levels of rioting activity and tension into regions with low levels of activity.   
In heterogeneous environments this notion, as introduced in the section \ref{sec:tw}, 
does not apply.  However, the need for such type of solutions still remains true which has lead to the generalized notion of such 
solutions in periodic domains.  These solutions are refereed to as {\it pulsating fronts} and they propagate in a given direction 
but with a profile that changes periodically \cite{Berestycki2002}.  
In other words, these fronts
connect the non-excited state and a periodic steady-state.  We observe such solutions in numerical simulations
and illustrate them in Figure \ref{fig:puls}.  In particular, when there is no restriction of information we observe 
pulsating fronts whose speed depends on the transition parameter.  Figure \ref{fig:puls1} illustrates a fast moving
wave that corresponds to a slow transition rate.  On the other hand, Figure \ref{fig:puls2} illustrates a slower moving
pulsating wave that corresponds to a fast transition rate.  Finally, Figure \ref{fig:puls3} illustrates a lack of 
such type of solutions when there is some restriction of information.  
In fact, Proposition \ref{prop:unst} provides a hint as to when
we expect these pulsating waves to exists.  Specifically, when the principle eigenvalue satisfying \eqref{sys:pev} 
is negative there is hope of a pulsating front existing.  This is an important open problem which we hope to address in the future.

\begin{figure}[H] 
  \center
  \subfloat[$\alpha =0$ and $\beta$ small: $u_0(0) = 5e^{-x}$ ]{\label{fig:puls1}\includegraphics[width=1\textwidth]{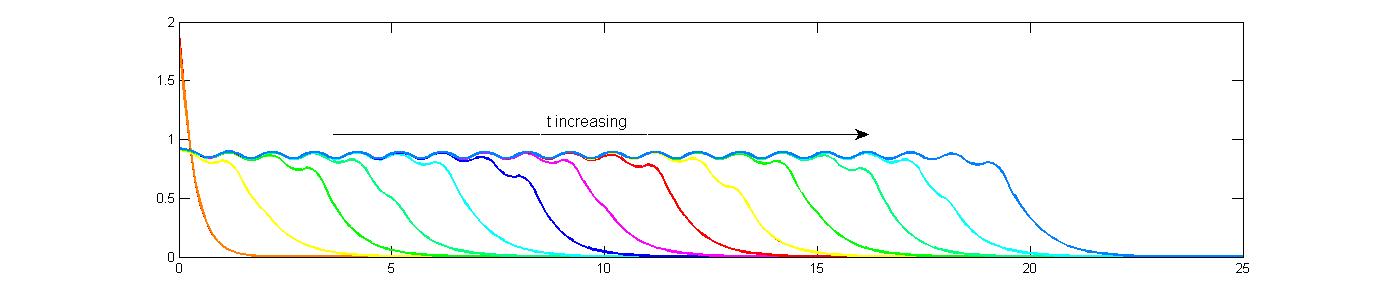}}\\
  \subfloat[$\alpha =0$ and $\beta$ large]{\label{fig:puls2}\includegraphics[width=1\textwidth]{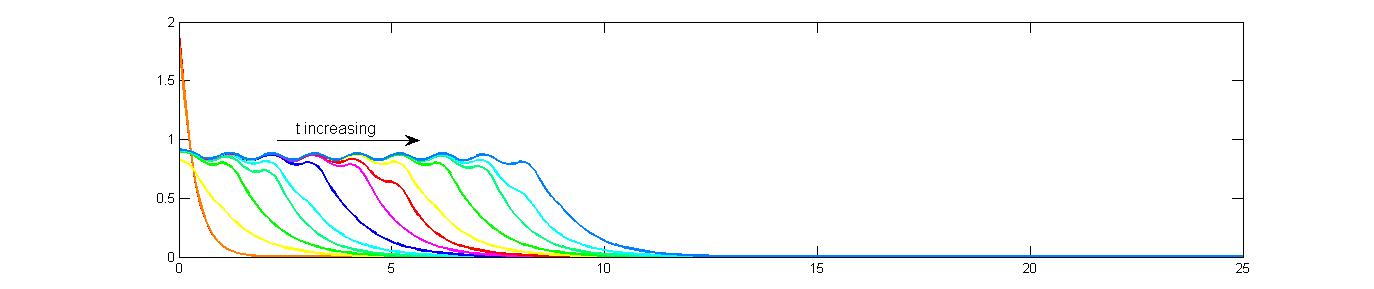}}\\
  \subfloat[$0<\alpha <1$]{\label{fig:puls3}\includegraphics[width=1\textwidth]{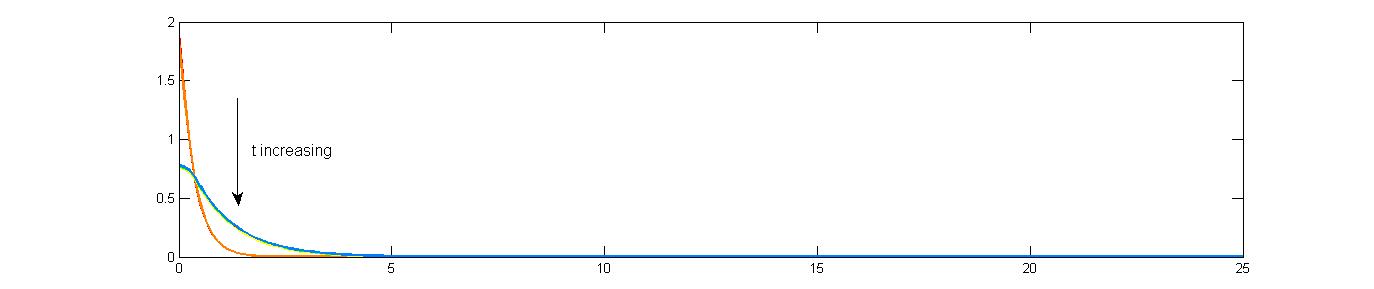}}
 \caption{\small{Pulsating waves in a periodic environment for the level of rioting activity.}\label{fig:puls}}
\end{figure}

\subsection{Gaps in upheaval waves}  
Another typical heterogeneous environment studied in various fields, such as ecology, neural networks, mathematical criminology, and excitable media
(see \cite{Lewis2011,BRR12,Rodriguez2014} and references within) is the so-called gap (or barrier) problem.  In this context, we consider an environment
where there are three regions each with a different level of restriction of information.  
For the sake of illustration let us imagine that we have three neighboring countries, each state is denoted by $S_i,$ for $i = 1,2,3$ in $I\subset\Real.$  
In all of the simulations that we perform the domain of interest is the interval $I=[0,15]$ and the middle country, $S_2,$ has the highest level of
restriction of information, that is $\alpha = 1$.  To make this precise, we define the heterogeneous restriction of information as follows:
\begin{align}
\alpha(x) =\left\{ \begin{array}{ll}
\alpha_1 & x \in S_1,\\
1.0 & x \in S_2,\\
\alpha_2 & x\in S_3,\end{array}\right.
\end{align}
for $0\le \alpha_1,\alpha_2<1.$  Figures \ref{fig:gap1} and \ref{fig:gap2} illustrate bistable and monostable upheaval waves,
respectively, 
which successfully cross the region with high restriction of information.  
\begin{remark}
This offers a possibility as to why
protests or riots fail in certain countries, while being globally successful.    If the country has a high restriction of information it will 
not experience protests even if the neighboring countries have full force protests.   
\end{remark}

In Figures \ref{fig:gap3} and
\ref{fig:gap4} we explore the effect of the size of the middle country on whether the 
revolution is able to cross the country with high restriction of information and take off in the neighboring country.  
In turns out that if the country is sufficiently small
the upheaval wave will successfully cross the country with high restriction of information (still skipping that country), 
this is illustrated in Figure \ref{fig:gap4}.
On the other hand, if the country is too large then this is enough to prevent the upheaval wave from going beyond 
the originating country, this is illustrated in Figure \ref{fig:gap4}.

\begin{figure}[H] 
  \center
  \subfloat[Bistable invasive wave ]{\label{fig:gap1}\includegraphics[width=0.4\textwidth]{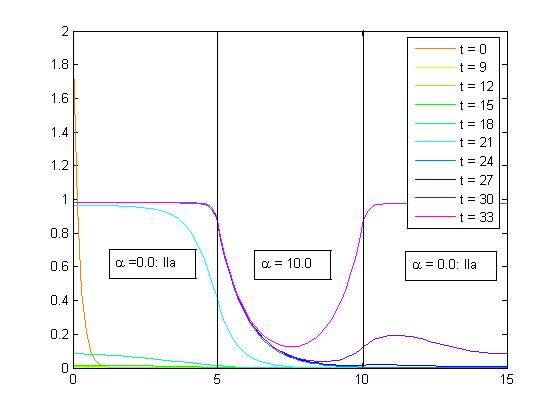}}\quad\quad\quad
 \subfloat[Monostable invasive wave]{\label{fig:gap2}\includegraphics[width=0.4\textwidth]{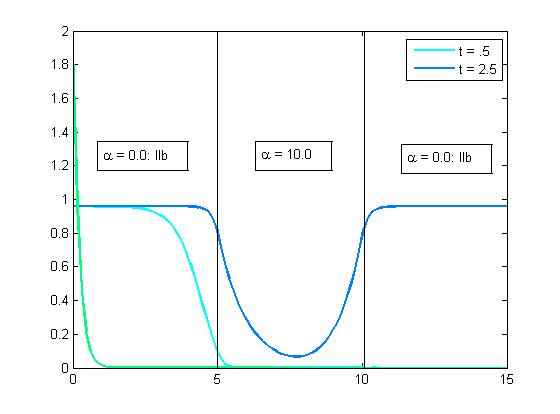}}\\
  \subfloat[Bistable blocked wave: large gap]{\label{fig:gap3}\includegraphics[width=0.4\textwidth]{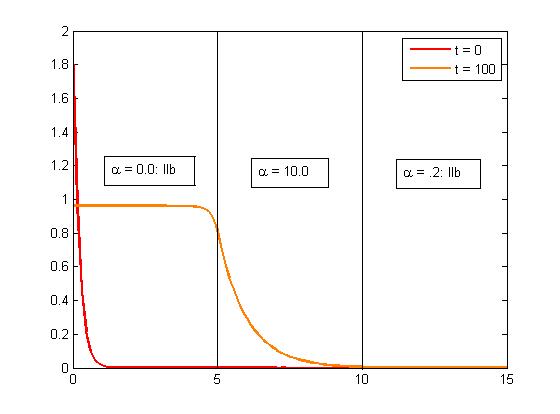}}\quad\quad
 \subfloat[Bistable invasive wave: small gap]{\label{fig:gap4}\includegraphics[width=0.4\textwidth]{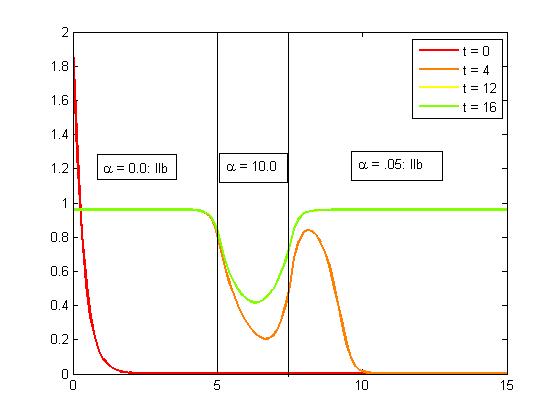}}\\
 \caption{\small{Level of rioting activity in a heterogeneous environment with a gap.}\label{fig:func}}
\end{figure}

\section{Discussion}\label{sec:disc}
This work is a sequel to \cite{Berestycki2014} where we introduce mathematical models for the 
dynamics of rioting activity, which included exogenous and endogenous effects as well as 
non-geographical connections.  
Here we have presented a way to model restriction of information in a given environment, which plays
a significant role on whether rioting activity or protests will be successful, in the sense that they will spread
and last for prolonged periods of time.  This model allowed us to 
explore the effects of different heterogeneities in the environment in the persistence/extinction
of protests and in the existence or non/existence of {\it upheaval waves}.  

We conclude that upheaval waves can, in fact, exist even when there is some restriction of information.
However, when there is no restriction of information these waves can travel arbitrarily fast, which is not the case when 
there is some restriction of information.  As the restriction of information increases we also observe the existence of 
retrieving traveling waves, which essentially are solutions with a negative speed.  From the point of view of the application we have only begun
to scratch the surface and there are many questions that remain to be explored.  For example, the question of whether there are external factors that can help 
stop certain riots is of interest.   

In addition to vast amount of work that remains to be done from the point of view of the application, 
there is also much mathematical theory that remains to be developed for the systems that we introduce here and in \cite{Berestycki2014}.
For example, an extremely interesting open problem is
to prove the existence of traveling wave solutions in the case when the social tension diffuses non-locally.  Such results hold
for single parabolic equitations, under some conditions of the kernel; however, to our knowledge the system has not be treated. 
The existence of pulsating waves in periodic domains, which we observe numerically in our simulations, also remains to be proved.  A Liouville 
type result for the system of equations is also an interesting avenue of research.

\section*{Acknowledgments}
The research leading to these results has received funding from the European Research Council
under the European Union's Seventh Framework Programme (FP/2007-2013) / ERC Grant
Agreement n. 321186 - ReaDi - ``Reaction-Diffusion Equations, Propagation and Modelling'' held by Henri Berestycki. 
This work was also partially supported by the French National Research Agency (ANR), within  project NONLOCAL ANR-14-CE25-0013, and by the French National Center for Scientific Research (CNRS), 
within the project PAIX of the CNRS program PEPS HuMaIn.

The authors are much grateful to Jean-Pierre Nadal for the many interesting and useful discussions. 
	
\section{Appendix}\label{sec:apx}
The appendix is devoted to working through some of the analysis and heuristics used to develop Figures \ref{fig:bif1} and \ref{fig:bif2}.
As noted in section \ref{sub:ss} any constant steady-state solution of \eqref{sys:non_dim_1} with $k = 0$ must satisfy:
\begin{align*}
v^*(u) = \frac{1}{h(u)-k_2}\quad\text{and}\quad \Phi(v^*(u),u)=0.  
\end{align*}
Note that $v^*(u)$ is an increasing function of $u$ such that
\[
\lim_{u\rightarrow \bar u} v^*(u) = \infty, 
\]
for $\bar u = \left(\frac{k}{k_2}\right)^{1/p} -1.$  For simplicity, let us define
\[
H(u):=\Phi(v^*(u),u).  
\]
\subsubsection*{I. $\alpha=0$ case:}
For notational simplicity let us set $G(u)=G_0(u)$. First, we observe that 
$H(u)\le \rho u(1-u)-u$ which is strictly negative for all $u>0$ if $\rho\le 1$.  Now, consider the derivative of $H(u)$ at $u=0$ 
\begin{align}\label{eq:der}
H'(0)=\frac{\rho}{1+e^{-\beta(v^*(0)-\bar a)}}-1,
\end{align}
In particular, for 
\[
\beta\ge \frac{\ln(\rho-1)}{\bar a-v^*(0)}.
\]
we obtain that $H'(0)>0$. This gives the first curve in Figure \ref{fig:bif1}, which is defined by $\beta_1(\rho)=\frac{\ln(\rho-1)}{\bar a-v^*(0)}$ and is the curve where
$H'(0)=0$.  Now, in the region where $H'(0)<0$ we need to determine if $H(u)\ge 0$ for some 
$0<u<\min\set{1,\bar u}$.  For this purpose it is useful to look at the details of $H(u)$
\[
H(u) = \left(\frac{\rho}{1+e^{-\beta (v^*(u)-\bar a)}}-1\right)u-\frac{\rho}{1+e^{-\beta (v^*(u)-\bar a)}}u^2.
\]
\begin{figure}[H] 
\center {\includegraphics[width=0.4\textwidth]{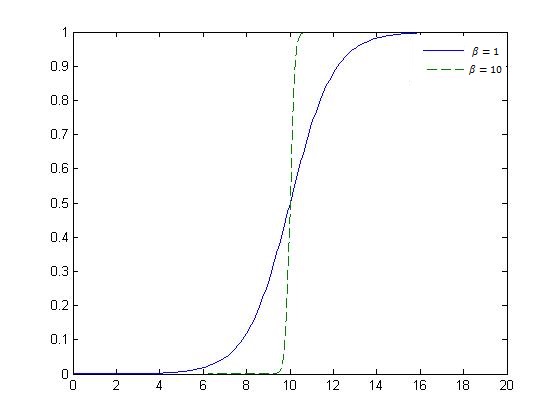}}
  \caption{\small{Illustration of the function $r(z)$ with critical tension $\bar a = 10$ and transition parameter $\beta=1$ and $\beta = 10.$ }}
\end{figure}
Note that for $v^*(u)>\bar a$ for $\beta$ sufficiently large we have that $\frac{\rho}{1+e^{-\beta (v^*(u)-\bar a)}}\approx \rho.$  Thus, for
each $\rho >2$ there exists a $\beta_\rho$ such that $H(u)>0$ for some $u<\min{1,\bar u}$ with $\beta >\beta_\rho$.  In particular $\beta_\rho$ is decreasing.  Moreover, 
if $\rho$ is sufficiently large then $H(u)>0$ for some $u<\min\set{1,\bar u}$ and all $\beta.$  This gives the curve $\beta_2(\rho)$ in Figure \ref{fig:bif1}.  Similarly, we can see that
when in the regime where $H'(0)<0$ but $H(u)>0$ for some $u<\min\set{1,\bar u}$ so that there are three constant steady-state solutions for $\rho$ in intermediate regions
then the quantity $F(u^*)<0$ as defined in \eqref{def:sign} where $u^*$ is the largest zero.  On the other hand, as $\rho$ increases $F(u^*)$ will become positive.  This gives rise to
the curve given by $\beta_4(\rho)$.  Finally, to see where $\beta_3(\rho)$ comes from, first notice that from \eqref{eq:der} that $H'(0)$ increases with $\rho$ from zero.  On top of that,
$H'(0)$ will decrease with $\beta.$ 

\subsubsection*{II. $0<\alpha<1$ case:}

As soon as the restriction of information parameter is positive we see immediately that $H'(0)<0.$  In this case there will generally only be one or three constant steady-state solutions. The possibility
of two steady-state solutions exists on the curve $\beta_1(\rho)$ in Figure \ref{fig:bif2}.  The parameters lying on the curve $\beta_1(\rho)$ leads to a non-zero steady-state solution which is 
degenerate.  In fact, $H(u)<0$ for all $u>0$.  For the curve $\beta_2(\rho)$ we can argue as was done for the $\alpha = 0$ case.

\bibliographystyle{plain}
\bibliography{library}

\end{document}